\documentclass[reqno, 12pt]{amsart}
\usepackage{amssymb,amsmath,amsfonts,amsthm,comment,mathrsfs,times,graphicx}
\usepackage[bookmarksnumbered, colorlinks, plainpages]{hyperref}
\usepackage{color}
\usepackage[english]{babel}
\usepackage[all,cmtip]{xy}
\usepackage{lmodern}
\usepackage{enumitem}
\usepackage{geometry}
\geometry{hmargin=2.5cm,vmargin=1.5cm} 
\newcounter{dummy} \numberwithin{dummy}{section}
\newtheorem{theo}[dummy]{Theorem }
 \newtheorem{coro}[dummy]{Corollary}
 \newtheorem{lem}[dummy]{Lemma}
 \newtheorem{pro}[dummy]{Proposition}
\newtheorem{deft}[dummy]{Definition}

\newtheorem{rem}[dummy]{Remark}

\title[ The index of Rubin-Stark units ]{The index of Rubin-Stark units }
\author[Saad El Boukhari \& Youness Mazigh]{Saad El Boukhari$^{(1)}$ \& Youness Mazigh$^{(1),(2)}$}
\address{$^{(1)}$ Université Moulay Ismaïl,
 Département de mathématiques,
Faculté des sciences de Meknès, B.P. 11201 Zitoune, Meknès, Maroc.}
\email{\textcolor[rgb]{0.00,0.00,1.00}{y.mazigh@edu.umi.ac.ma}}
\address{$^{(2)}$ Université Franche-comté,
Laboratoire de mathématique, 16 Route de Gray, 25030 Besançon,
 cedex, France.}
 \email{\textcolor[rgb]{0.00,0.00,1.00}{youness.mazigh@univ-fcomte.fr}}
 \keywords{ Rubin-Stark's conjecture, $L$-functions, Sinnott index.}
 \subjclass[2010]{11R23, 11R27, 11R29}
\begin{document}
\maketitle
\renewcommand{\abstractname}{Abstract}
\begin{abstract} The aim of this paper is to compare  the orders of the class groups and the quotients of the
$r$-th exterior power of units modulo Rubin-Stark units.
\end{abstract}
\section{Introduction and Preliminares}
The class number associated with a number field is known to be
related to $L$-functions, and this can provide valuable information
about class groups using computations of special values of those
functions. A direct way to link those two concepts is based on what
is called class number formulas. \vskip 6pt Class number formulas
where the class number is compared to the index of special units
within their group of units have been formulated in the abelian and
imaginary cases for circular and elliptic units respectively. It
seems, however, that such results that would use the Rubin-Stark
units are absent from literature and it is in this perspective that
this work has been conducted.\vskip 6pt
 This paper has therefore for aim to
formulate and prove a class number formula which involves the index
of Rubin-Stark units within the group of $S$-units. We introduce
first some of the notations that will be used for this
purpose.\vskip 6pt Let $k$ be a totally real field of degree
$r=[k:\mathbb{Q}]$ and let $K/k$ be a finite abelian extension of
totally real number fields with Galois group $G$. Fix a finite set
$S$ of places of $k$ containing infinite places and all places
ramified in $K/k$, and a second finite set $T$ of places of $k$,
disjoint from $S$. Let
$\widehat{G}=\mathrm{Hom}(G,\mathbb{C}^{\times})$. If
$\chi\in\widehat{G}$, the modified Artin $L$-function attached to
$\chi$ is defined for $s\in \mathbb{C},\; \mathrm{Re}(s)>1$ by
\begin{equation*}
L_{S,T}(s,\chi)=\prod_{\mathfrak{p}\not\in
S}(1-\chi(\sigma_{\mathfrak{p}})\mathbf{N}\mathfrak{p}^{-s})^{-1}
\prod_{\mathfrak{p}\in
T}(1-\chi(\sigma_{\mathfrak{p}})\mathbf{N}\mathfrak{p}^{1-s}),
\end{equation*}
where $\sigma_{\mathfrak{p}}\in G$ is the Frobenius of the
(unramified) prime $\mathfrak{p}$. This function can be analytically
continued to a meromorphic function on $\mathbb{C}$.
\par For each $\chi\in
\widehat{G}$, there is an idempotent
\begin{equation*}
e_{\chi}=\frac{1}{|G|}\sum_{\sigma\in G}\chi(\sigma)\sigma^{-1}\in
\mathbb{C}[G].
\end{equation*}
Following \cite{Tate84} we define the Stickelberger element
\begin{equation*}
\Theta_{S,T}(s)=\Theta_{S,T,K/k}(s)=\sum_{\chi\in\widehat{G}}L_{S,T}(s,\chi^{-1})e_{\chi}
\end{equation*}
which is viewed as a $\mathbb{C}[G]$-valued meromorphic function on
$\mathbb{C}$.
  Let $\chi\in \widehat{G}$ and let
$r_{S}(\chi)$ be  the order of vanishing of $L_{S,T}(s,\chi)$ at
$s=0$. Recall that
\begin{equation*}
r_{S}(\chi)=\mathrm{ord}_{s=0}L_{S,T}(s,\chi)=\left\{
                                              \begin{array}{ll}
                                                |\{v\in S\;:\; \chi(D_{v}(K/k))=1\}|, & \hbox{$\chi\neq 1$;} \\
                                                |S|-1, & \hbox{$\chi=1$.}
                                              \end{array}
                                            \right.
\end{equation*}
(see \textit{e.g}.\, \cite[Proposition \textrm{I}.3.4]{Tate84}),
where $D_{v}(K/k)$ is the decomposition group of $v$ relative to
$K/k$.\vskip 6pt Before stating the Rubin-Stark conjecture we record
some hypotheses $\mathbf{H}(K/k,S,T,r)$:
\begin{enumerate}
    \item $S$ contains all the infinite primes of $k$ and all the
    primes which ramify in $K/k$;
    \item $S$ contains at least $r$ places which split completely in
    $K/k$;
    \item $|S|\geq r+1$;
    \item $T\neq \emptyset$, $S\cap T=\emptyset$ and $U_{S,T}(K)$ is
    torsion-free.
\end{enumerate}
Here $U_{S,T}(K)$ is the group of $S$-units of $K$ which are
congruent to $1$ modulo all the primes in $T$.\vskip 6pt
 Conditions
$(2)$ and $(3)$ ensure that $s^{-r}\Theta_{S,T}(s)$ is holomorphic
at $s=0$. Since $K/k$ is an extension of totally real fields and $S$
contains all infinite places the second condition is satisfied by
default. The condition $(4)$ is easily satisfied, for example if $T$
contains primes of two different residue characteristics.\vskip 6pt
For any set $V$ of places of $k$, we denote by $V_{K}$ the set of
places of $K$ lying above places in $V$ and by $\mathbb{Z}V_{K}$ the
free abelian group on $V_{K}$. Let $M$ be a $\mathbb{Z}$-module. If
$\mathbf{R}$ is one of the fields $\mathbb{Q}, \mathbb{R}$ or
$\mathbb{C}$, we denote by $\mathbf{R}M$ the tensor product
$\mathbf{R}\otimes_{\mathbb{Z}} M$. We extend this notation to sets
of primes of $K$, we denote by $\mathbf{R}V_{K}$ the tensor product
$\mathbf{R}\otimes_{\mathbb{Z}}\mathbb{Z}V_{K}$. The exterior power
over $\mathbb{Z}[G]$, and $\mathrm{Hom}$ of $\mathbb{Z}[G]$-modules
are denoted by
\begin{equation*}
\bigwedge_{G}\;,\; \mathrm{Hom}_{G}(-,-)
\end{equation*}
respectively.\vskip 6pt
 Assume that $V$ is finite and contains only finite primes. We denote by $S_{\infty}$  the
set of infinite places of
 $k$. Let $S=S_{\infty}\cup V$, so that
 \begin{equation*}
\mathbb{R}S_{K}=\mathbb{R}S_{\infty,K}\oplus\mathbb{R}V_{K}
\end{equation*} (as $\mathbb{R}[G]$-modules) and let $\pi_{\infty}$ denote
the projection from $\mathbb{R}S_{K}$ to $\mathbb{R}S_{\infty,K}$.
We define $\mathcal{L}_{S,\infty}$ as the composite
$\pi_{\infty}\circ \mathcal{L}_{S}$:
\begin{equation}\label{the map LS}
\xymatrix@=2pc{\mathcal{L}_{S,\infty}:
\mathbb{R}U_{S,T}(K)\ar[r]^-{\mathcal{L}_{S}}&
\mathbb{R}S_{K}\ar[r]^-{\pi_{\infty}}& \mathbb{R}S_{\infty,K}},
\end{equation}
where $\mathcal{L}_{S}$ is a logarithmic 'embedding' of
$U_{S,T}(K)$:
\begin{equation*}
\begin{array}{cccc}
\mathcal{L}_{S}: & U_{S,T}(K) & \xymatrix@=1.5pc{\ar[r]&} & \mathbb{R}S_{K} \\
 & \varepsilon & \xymatrix@=1.5pc{\ar@{|->}[r]&} & -\sum_{w\in
 S_{K}}\log(|\varepsilon|_{w})w.
\end{array}
\end{equation*}
Taking $r$-th exterior powers over the commutative ring
$\mathbb{R}[G]$ gives an $\mathbb{R}[G]$-linear map
\begin{equation*}
\xymatrix@=2pc{
\bigwedge^{r}_{\mathbb{R}[G]}\mathcal{L}_{S,\infty}:\;\bigwedge^{r}_{\mathbb{R}[G]}U_{S,T}(K)\ar[r]&
\bigwedge^{r}_{\mathbb{R}[G]}\mathbb{R}S_{\infty,K}=\mathbb{R}[G](w_{1}\wedge...\wedge
w_{r})},
\end{equation*}
where $w_{1},...,w_{r}$ is a choice of $r$-places of $K$ above the
infinite places $\{v_{1},...,v_{r}\}$ of $k$. Since
$w_{1}\wedge...\wedge w_{r}$ is a free generator we can define a
unique $\mathbb{R}[G]$-linear 'regulator' $\mathrm{R}_{w}$, called
Rubin-Stark regulator:
\begin{equation*}
\mathbb{R}\bigwedge^{r}_{G}U_{S,T}(K) \longrightarrow
\mathbb{R}[G]\;\mbox{by}\;
\bigwedge^{r}_{\mathbb{R}[G]}\mathcal{L}_{S,\infty}(x)=\mathrm{R}_{w}(x)(w_{1}\wedge...\wedge
w_{r}).
\end{equation*}
Explicitly, every element of
$\mathbb{R}\bigwedge^{r}_{\mathbb{Z}[G]}U_{S,T}(K)$ is a finite sum
of terms of  the form
$\varepsilon_{1}\wedge\cdots\wedge\varepsilon_{r}$
 with $\varepsilon_{i}\in \mathbb{R}U_{S,T}(K)$ and
\begin{equation*}
\begin{array}{ccc}
 \mathrm{R}_{w}\;: \mathbb{R}\displaystyle{\bigwedge^{r}_{G}}U_{S,T}(K) & \xymatrix@=2pc{ \ar[r]&} & \mathbb{R}[G] \\
  \varepsilon=\varepsilon_{1}\wedge\cdots\wedge\varepsilon_{r} & \xymatrix@=2pc{\ar@{|->}[r]&} &
  \det(-\sum_{\sigma\in
  G}\log|\varepsilon_{i}^{\sigma}|_{w_{j}}\sigma^{-1})_{i,j=1}^{r}.
\end{array}
\end{equation*}
\begin{deft}\label{Rubin lattice}
For a finitely generated $G$-module $M$ and $r\in \mathbb{Z}_{\geq
0}$, we define Rubin's lattice by
\begin{equation*}
\bigcap_{G}^{r}M=\{ m\in
\mathbb{Q}\displaystyle{\bigwedge^{r}_{G}}M\,|\, \Phi(m)\in
\mathbb{Z}[G]\,\mbox{for all}\, \Phi\in
\displaystyle{\bigwedge_{G}^{r}\mathrm{Hom}_{G}(M,\mathbb{Z}[G])}\}.
\end{equation*}
\end{deft}
\begin{rem}\label{remark rubin lattice}
Let $M^{\prime}$ be a finitely generated $G$-module. If
$M\longrightarrow M^{\prime}$ is a $G$-homomorphism, then it induces
a natural $G$-homomorphism
\begin{equation*}
\xymatrix@=2pc{\displaystyle{\bigcap_{G}^{r}}M\ar[r]&
\displaystyle{\bigcap_{G}^{r}}M^{\prime}}.
\end{equation*}
Besides, if $M\longrightarrow M^{\prime}$ is injective and its
cokernel is torsion-free, then the induced map
\begin{equation*}
\xymatrix@=2pc{\displaystyle{\bigcap_{G}^{r}}M\ar[r]&
\displaystyle{\bigcap_{G}^{r}}M^{\prime}}.
\end{equation*}
is injective ($\mathrm{e.g.}$\, \cite[Lemma 2.11]{Sano}).\\
Note that the Sinnott index
$(\displaystyle{\bigcap^{s}_{G}}M:\widetilde{\displaystyle{\bigwedge^{s}_{G}}}M)$
is finite ( $\mathrm{ e.g.}$ \cite[Proposition 1.2]{Rubin96}), where
$\widetilde{\displaystyle{\bigwedge_{G}^{s}}} M$ denotes the image
of $\displaystyle{\bigwedge_{G}^{s}}M$ via the canonical morphism
\begin{equation*}
\xymatrix@=2pc{\displaystyle{\bigwedge_{G}^{s}}M\ar[r]&
\mathbb{Q}\displaystyle{\bigwedge_{G}^{s}}M}.
\end{equation*}
\end{rem}
 Let $\Theta_{S,T}^{(r)}(0)$ be the coefficient of
$s^{r}$ in the Taylor series of $\Theta_{S,T}$\,;
\begin{equation*}
\Theta_{S,T}^{(r)}(0):=\displaystyle{\lim_{x\rightarrow 0}}
s^{-r}\Theta_{S,T}^{(r)}(s).
\end{equation*}
 Conjecture $\mathrm{B}^{\prime}$ (Rubin-Stark conjecture) of \cite{Rubin96} predicts the existence of
certain elements
\begin{equation*}
\eta_{K,S,T}\in\displaystyle{\bigcap_{G}^{r}U_{S,T}(K)}\quad\mbox{such
that}\quad \mathrm{R}_{w}(\eta_{K,S,T})=\Theta_{S,T}^{(r)}(0).
\end{equation*}
\vskip 6pt Let $\mathfrak{f}$ denote the finite part of the
conductor of $K/k$ ( we assume that  $\mathfrak{f}\neq (1)$ ). For
any ideal $\mathfrak{a}$ we denote the product of all distinct prime
ideals dividing $\mathfrak{a}$ by $\widehat{\mathfrak{a}}$ and
$\mathrm{T}_{\mathfrak{a}}(K)$ the subgroup of $G$ generated by the
inertia groups $I_{\mathfrak{q}}(K/k)$ with $\mathfrak{q}\mid
\mathfrak{a}$. If $\mathfrak{a}=(1)$ we set
$\mathrm{T}_{(1)}=\{1\}$. For any cycle $\mathfrak{g}\mid
\widehat{\mathfrak{f}}$, we denote the maximal subextension of $K$
whose conductor is prime to $\mathfrak{f}\mathfrak{g}^{-1}$ by
$K_{\mathfrak{g}}=K^{I_{\mathfrak{f}\mathfrak{g}^{-1}}}$. In the
sequel, we will fix  a finite set $S^{\prime}$ of finite places of
$k$ which contains at least one finite place, and will denote by
$S_{\mathfrak{g}}$ the set
\begin{equation*}
S_{\mathfrak{g}}=S_{\infty}\cup \{\mathfrak{q}: \mathfrak{q}\mid
\mathfrak{g}\}\cup S^{\prime}.
\end{equation*}
Let us also denote by $S$ the set $S=S_{\widehat{\mathfrak{f}}}$.
\\ Since $K_{\mathfrak{g}}$ is totally real then the hypothesis
$\mathbf{H}(K_{\mathfrak{g}}/k,S_{\mathfrak{g}},T,r)$ is
satisfied.\vskip 6pt In the rest of this paper  we assume the
validity of Rubin-Stark conjecture.
\begin{deft}
We denote by $\mathrm{St}_{K,T}$ the $\mathbb{Z}[G]$-module
generated by $\eta_{K_{\mathfrak{g}},S_{\mathfrak{g}},T}$ for all
$\mathfrak{g}\mid \widehat{\mathfrak{f}}$.
 \end{deft}
We will see that
\begin{equation*}
\xymatrix@=2pc{\displaystyle{\bigcap_{\mathrm{Gal}(K_{\mathfrak{g}}/k)}^{r}}U_{S_{\mathfrak{g}},T}(K_{\mathfrak{g}})\ar@{^{(}->}[r]&
\displaystyle{\bigcap_{G}^{r}}U_{S,T}(K)}
\end{equation*}
(see remark \ref{remark rubin lattice}), which justifies our
definition.\vskip 6pt Recall that a $\mathbb{Z}[G]$-lattice is
 a finitely generated $\mathbb{Z}[G]$-module which is a
torsion-free $\mathbb{Z}$-module.\vskip 6pt
 Let $e_{S,r}:=\Sigma_{\chi\in\hat{G},r_{S}(\chi)=r}e_{\chi}$. Note
that $e_{S,r}\in \mathbb{Q}[G]$ and for any $\mathbb{Z}[G]$-lattice
$M$, the $\mathbb{Z}[G]$-module
\begin{equation*}
e_{S,r}M=\{e_{S,r}m, m\in M\}
\end{equation*}
is a  lattice of the $\mathbb{Q}$-vector space
$e_{S,r}(\mathbb{Q}M)$. \vskip 6pt The goal  of this paper is the
following theorem
\begin{theo}\label{theorem 1} The Sinnott index $(e_{S,r}\displaystyle{\bigcap^{r}_{G}}
U_{S,T}(K):e_{S,r}\mathrm{St}_{K,T})$ is finite, and we have
\begin{equation*}
[e_{S,r}\displaystyle{\bigcap^{r}_{G}}
U_{S,T}(K):e_{S,r}\mathrm{St}_{K,T}]=h_{K}.(e_{S,r}\mathbb{Z}[G]
:e_{S,r}U^{(r)}_{K}).(e_{S,r}\displaystyle{\bigcap_{G}^{r}}U_{S,T}(K):
e_{S,r}\widetilde{\displaystyle{\bigwedge_{G}^{r}}}U_{S,T}(K)).
\beta_{K}.
\end{equation*}
 where $U^{(r)}_{K}$ is the Sinnott module (see Definition \ref{Sinnot
 modul}) and $\beta_{K}$  is well determined, see $(\ref{beta})$.
\end{theo}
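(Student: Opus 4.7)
The plan is to decompose the picture with the central idempotent $e_{S,r}=\sum_{\chi\in\widehat G,\, r_S(\chi)=r}e_\chi$: on the semisimple algebra $e_{S,r}\mathbb{R}[G]$ the Rubin-Stark regulator becomes an $e_{S,r}\mathbb{R}[G]$-linear isomorphism
\[
e_{S,r}\mathrm{R}_w\colon e_{S,r}\mathbb{R}\bigcap_G^r U_{S,T}(K)\;\xrightarrow{\ \sim\ }\; e_{S,r}\mathbb{R}[G],
\]
because for each such $\chi$ the map $\bigwedge^r_{\mathbb{R}[G]}\mathcal{L}_{S,\infty}$ is a $\chi$-isomorphism. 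The Rubin-Stark conjecture identifies the image of each generator $\eta_{K_{\mathfrak g},S_{\mathfrak g},T}$ of $\mathrm{St}_{K,T}$ with (the inflation to $K$ of) the Stickelberger derivative $\Theta^{(r)}_{S_{\mathfrak g},T,K_{\mathfrak g}/k}(0)$, so the desired index becomes an index of two $\mathbb{Z}[G]$-lattices inside $e_{S,r}\mathbb{Z}[G]$, automatically finite since both are of full $\mathbb{Z}$-rank.

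The next step is to split off the Rubin-lattice correction. Rationally $e_\chi\bigcap_G^r U_{S,T}(K)$ coincides with $e_\chi\bigwedge_G^r U_{S,T}(K)$ for each $\chi$ with $r_S(\chi)=r$, a standard base-change identity on the generic-rank component, so $e_{S,r}\mathrm{St}_{K,T}$ sits inside $e_{S,r}\widetilde{\bigwedge_G^r}U_{S,T}(K)$. By multiplicativity of the Sinnott index,
\[
[e_{S,r}\bigcap_G^r U_{S,T}(K):e_{S,r}\mathrm{St}_{K,T}]=[e_{S,r}\bigcap_G^r U_{S,T}(K):e_{S,r}\widetilde{\bigwedge_G^r}U_{S,T}(K)]\cdot[e_{S,r}\widetilde{\bigwedge_G^r}U_{S,T}(K):e_{S,r}\mathrm{St}_{K,T}],
\]
which isolates the third factor of the right-hand side of the theorem; only the rightmost index above needs to be analysed.

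For this I would work $\chi$ by $\chi$ using the analytic class number formula at $s=0$: for each $\chi$ with $r_S(\chi)=r$, the $\chi$-value $e_\chi\Theta^{(r)}_{S,T,K/k}(0)=L_{S,T}^{(r)}(0,\chi^{-1})e_\chi$ decomposes as the standard $S$-regulator $R_\chi$ of units times a rational multiple of $h_K$. Passing from the $(S,T)$-setup on $K$ to the $(S_{\mathfrak g},T)$-setup on $K_{\mathfrak g}$ inserts, character by character, the missing Euler factors $\prod_{\mathfrak p\in S\setminus S_{\mathfrak g}}(1-\chi(\sigma_{\mathfrak p})\mathbf{N}\mathfrak p^{-s})$ at $s=0$. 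Summing over all $\mathfrak g\mid\widehat{\mathfrak f}$ and using the very definition of the Sinnott module $U^{(r)}_K$ as the $\mathbb{Z}[G]$-span of the $e_{S,r}$-components of the Stickelberger values $\Theta^{(r)}_{S_{\mathfrak g},T,K_{\mathfrak g}/k}(0)$, inflated through the norm idempotents, one extracts the factor $h_K\cdot(e_{S,r}\mathbb{Z}[G]:e_{S,r}U^{(r)}_K)$.

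The main obstacle will be isolating $\beta_K$: it must simultaneously absorb the $T$-modification that converts $h_K$ into its $T$-variant, the finite cokernels of the inflation maps $\bigcap^r_{\mathrm{Gal}(K_{\mathfrak g}/k)}U_{S_{\mathfrak g},T}(K_{\mathfrak g})\hookrightarrow\bigcap^r_G U_{S,T}(K)$ of Remark \ref{remark rubin lattice}, and the Euler corrections at places of $S\setminus S_{\mathfrak g}$ accumulated above. Organising these character-by-character contributions coherently across the partially ordered set of divisors $\mathfrak g\mid\widehat{\mathfrak f}$, and verifying that they package exactly into the single invariant defined at $(\ref{beta})$, is where the bulk of the technical work should concentrate.
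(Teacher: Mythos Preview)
Your broad strategy---transport the index through $e_{S,r}\mathrm{R}_w$, split off the Rubin-lattice correction, and then argue character by character via the analytic class number formula---is the paper's strategy too. But two concrete points in your outline would not go through as written.

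First, your description of the Sinnott module is not the one in Definition~\ref{Sinnot modul}. The module $U^{(r)}_K$ is defined \emph{purely group-theoretically} as the $\mathbb{Z}[G]$-span of the elements $s(\mathrm{T}_{\mathfrak r}(K))^r\prod_{\mathfrak p\mid \widehat{\mathfrak f}/\mathfrak r}(1-\sigma_{\mathfrak p}^{-1}e_{I_{\mathfrak p}})$; no $L$-values enter. The link to Stickelberger elements is the factorisation of Proposition~\ref{Proposition omega} and Corollary~\ref{corollary 3}, which yield
\[
\mathrm{R}_w\big(e_{S,r}\,\mathrm{St}_{K,T}\big)\;=\;\omega_K\,\delta_T\,e_{S,r}\,U^{(r)}_K,\qquad
\omega_K=\sum_{r_S(\chi)=r}L^{(r)}(0,\widehat{\chi})\,e_{\chi^{-1}}.
\]
All the $L$-value content is concentrated in the single element $\omega_K$, and $U^{(r)}_K$ is an honest lattice in $\mathbb{Q}[G]$ independent of any analytic data. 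Without isolating $\omega_K$ in this way you cannot pull out $(e_{S,r}\mathbb{Z}[G]:e_{S,r}U^{(r)}_K)$ as a separate, integer-valued factor; your proposed ``definition'' of $U^{(r)}_K$ would instead produce the image lattice $\omega_K\,e_{S,r}U^{(r)}_K$, conflating the two factors $(e_{S,r}\mathbb{Z}[G]:e_{S,r}U^{(r)}_K)$ and $(e_{S,r}U^{(r)}_K:\omega_K e_{S,r}U^{(r)}_K)$.

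Second, your picture of $\beta_K$ does not match the quantity defined at~(\ref{beta}). In the paper one computes $(e_{S,r}U^{(r)}_K:\omega_K e_{S,r}U^{(r)}_K)=\big|\prod_{r_S(\chi)=r}L^{(r)}(0,\widehat\chi)\big|$ and rewrites this product, by inclusion--exclusion over subsets $I$ of the ramified primes, as $\zeta_K^*(0)\prod_I\zeta_{K_I}^*(0)^{(-1)^{|I|}}$ with $K_I=K^{D_I}$. Applying the class number formula to each $K_I$ and cancelling the resulting $\mathrm{Reg}_{K_I}$ against the denominator computed in Proposition~\ref{Proposition index of Regulator} leaves $\beta_K=c_Kc_{K,r}^{-1}\prod_I c_{K_I}^{(-1)^{|I|}}h_{K_I}^{(-1)^{|I|}}$, where the $c_{\bullet}$ are the semisimplification and Tate-cohomology constants of~(\ref{defenition of cF}) and~(\ref{definition of cesr}). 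None of this involves a $T$-modification of $h_K$, nor cokernels of the inflation maps between Rubin lattices: $\delta_T$ is a non-zero-divisor and passes harmlessly through the index calculus, and the inflation maps of Remark~\ref{remark rubin lattice} are used only to make sense of $\mathrm{St}_{K,T}$ as a submodule, not as a source of correction factors. The genuine work you should anticipate is the regulator comparison of Proposition~\ref{Proposition index of Regulator}, not the bookkeeping you describe.
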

  \section{Image by the Rubin-Stark regulator}
  Throughout this section, let $F=K_{\mathfrak{g}}$, $\eta_{F}=\eta_{K_{\mathfrak{g}},S_{\mathfrak{g}},T}$
   the Rubin-Stark element  in $K_{\mathfrak{g}}$. Let $H$ (resp. $\Delta$) denote the
   Galois group $\mathrm{Gal}(K/F)$ (resp. $\mathrm{Gal}(F/k)$). Let
   \begin{equation*}
   \xymatrix@=2pc{ \pi_{F}:\; \mathbb{C}[G]\ar[r]&
   \mathbb{C}[\Delta]}
   \end{equation*} denote the homomorphism induced by the natural
   surjection $\xymatrix@=1pc{ G\ar@{->>}[r]&\Delta}$, and let us
   fix $\gamma_{1},\cdots,\gamma_{d}\in G$, such that
   \begin{enumerate}
    \item $\gamma_{1}=1$
    \item $\{\pi_{F}(\gamma_{1}),\cdots,
    \pi_{F}(\gamma_{d})\}=\Delta.$
   \end{enumerate}
  \begin{pro}\label{Proposition regulator}
  Let $R_{w^{'}}$ be the restriction of the regulator map $R_{w}$
  to the subfield $F$ defined by using the infinites places
  $w_{1}^{'},..,w_{r}^{'}$ of $F$ below the places $w_{1},..,w_{r}$ of $K$.
 Then for any element $u_{F}\in
 \mathbb{R}\displaystyle{\bigwedge^{r}_{\Delta}}U_{S,T}(F)$ we have
\begin{equation*}
\pi_{F}(R_{w}(u_{F}))=|H|^{r}R_{w^{'}}(u_{F}).
\end{equation*}
\end{pro}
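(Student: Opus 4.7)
\medskip

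\noindent\textbf{Proof plan.} The strategy is a direct computation on the entries of the matrix defining $\mathrm{R}_w$. By $\mathbb{R}$-multilinearity of the exterior product and $\mathbb{R}$-linearity of both $\pi_F$ and $\mathrm{R}_{w'}$, I would first reduce to the case of a pure wedge $u_F=\varepsilon_1\wedge\cdots\wedge\varepsilon_r$ with $\varepsilon_i\in U_{S,T}(F)$, viewed inside $U_{S,T}(K)$ through the inclusion $F^\times\hookrightarrow K^\times$. Note that $\mathrm{R}_w(u_F)$ then makes sense by the determinantal formula recalled in the introduction.

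Next, I would unwind the $(i,j)$ entry
\[
A_{ij}=-\sum_{\sigma\in G}\log|\varepsilon_i^{\sigma}|_{w_j}\,\sigma^{-1}\in\mathbb{R}[G]
\]
and apply $\pi_F$. The two facts I would exploit are: (i) since $\varepsilon_i\in F^\times$ is fixed by $H=\mathrm{Gal}(K/F)$, the element $\varepsilon_i^\sigma$ depends only on the class $\pi_F(\sigma)\in\Delta$; (ii) because $K/F$ is an extension of totally real fields, every $w_j$ is a real place of $K$ lying above the real place $w'_j$ of $F$, so $|\alpha|_{w_j}=|\alpha|_{w'_j}$ for $\alpha\in F$. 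Splitting $\sigma=\gamma_a h$ along the coset decomposition $G=\bigsqcup_{a=1}^{d}\gamma_a H$ and using $\pi_F(\gamma_a h)=\pi_F(\gamma_a)$, the ring homomorphism $\pi_F$ transforms $A_{ij}$ into
\[
\pi_F(A_{ij})=-\sum_{a=1}^{d}\sum_{h\in H}\log|\varepsilon_i^{\gamma_a}|_{w'_j}\,\pi_F(\gamma_a)^{-1}
=-|H|\sum_{\delta\in\Delta}\log|\varepsilon_i^{\delta}|_{w'_j}\,\delta^{-1}=|H|\,B_{ij},
\]
where $B_{ij}$ is the $(i,j)$ entry of the matrix computing $\mathrm{R}_{w'}(u_F)$ over $\mathbb{R}[\Delta]$.

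Finally, since $\pi_F:\mathbb{R}[G]\to\mathbb{R}[\Delta]$ is a ring homomorphism between commutative rings, it commutes with the determinant, so
\[
\pi_F\bigl(\mathrm{R}_w(u_F)\bigr)=\det\bigl(\pi_F(A_{ij})\bigr)_{i,j}=\det\bigl(|H|\,B_{ij}\bigr)_{i,j}=|H|^{r}\mathrm{R}_{w'}(u_F),
\]
which yields the claim. The only point requiring care is the equality of absolute values $|\cdot|_{w_j}$ and $|\cdot|_{w'_j}$ on $F$; this is where the totally real hypothesis on $K/k$ (forcing local degrees $1$ at all archimedean places) is used, and it is the sole nontrivial step of the argument.
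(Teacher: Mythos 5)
Your proof is correct and follows essentially the same route as the paper: decompose $G$ into cosets of $H$, use that elements of $F$ are fixed by $H$ so the entries pick up a factor $|H|$, identify $|\cdot|_{w_j}$ with $|\cdot|_{w'_j}$ on $F$ because the archimedean places split completely (local degree $1$), and pull the entrywise ring homomorphism $\pi_F$ through the determinant to get the factor $|H|^r$. No gaps to report.
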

\begin{proof}  By definition
\begin{equation*}
 R_{w}(u_{F})=\mathrm{det}(a_{i,j})_{i,j},
 \end{equation*}
  where
\begin{equation*}
   a_{i,j}=-\Sigma_{\sigma\in
   G}\mathrm{log}\mid(u_{F})_{i}^{\sigma^{-1}}\mid_{w_{j}}\sigma,
   \end{equation*}
  here we denote by $u_{F}=(u_{F})_{1}\wedge..\wedge(u_{F})_{i}
  \wedge..\wedge(u_{F})_{r}$.
  Let us first calculate the coefficient $a_{i,j}$ for some given $(i,j)$.
   To simplify notations we refer to $(u_{F})_{i}$ simply as $u$. Then
  \begin{align*}
 \pi_{F}( \Sigma_{\sigma\in G}\mathrm{log}\mid u^{\sigma^{-1}}\mid_{w_{j}}\sigma)&={\Sigma}_{i=1}^{d}
  {\Sigma}_{h\in H}\mathrm{log}\mid u^{\gamma_{i}^{-1}h^{-1}}\mid_{w_{j}}\pi_{F}(\gamma_{i}h)\\
  &={\Sigma}_{i=1}^{d}{\Sigma}_{h\in H}\mathrm{log}\mid u^{\pi_{F}(\gamma_{i})^{-1}}\mid_{w_{j}}\pi_{F}(\gamma_{i})\;,\quad(u\in\mathbb{R}U_{S,T}(F))\\
  &=|H|(\Sigma_{i=1}^{d}\mathrm{log}\mid
  u^{\pi_{F}(\gamma_{i})^{-1}}\mid_{w_{j}}\pi_{F}(\gamma_{i})).
  \end{align*}
  Since $w^{\prime}_{j}={w_{j}}_{\mid F}$ is  completely decomposed in
  $K/F$, we obtain $\mid u^{\gamma_{i}^{-1}}\mid_{w_{j}}=\mid
  u^{\gamma_{i}^{-1}}\mid_{w^{\prime}_{j}}$.
 Finally we have
\begin{equation*}
 \pi_{F}(\mathrm{R}_{w}(u_{F}))=|H|^{r}\mathrm{R}_{w^{'}}(u_{F})
 \end{equation*}
 where $R_{w^{'}}$ is the same as $R_{w}$ but defined over
 $F$ instead of $K$ using the infinite places $w_{1}^{'},..,w_{r}^{'}$
 of $F$ below the places $w_{1},..,w_{r}$ of $K$.
 \end{proof}
For any character $\psi\in \hat{\Delta}$, let $\mathfrak{f}_{\psi}$
denote the conductor of $\psi$. Let $\hat{\psi}$ denote the
associated primitive character obtained by restricting $\psi$ to
$\Delta/\mathrm{ker}(\psi)$ (so that we obtain a faithful
character). Let us denote by $L(s,\widehat{\psi})$ the primitive
Hecke $L$-function defined for
  $ \mathrm{Re}(s)> 1$ by the Euler product
  \begin{equation*}
  L(s,\widehat{\psi})=\prod_{\mathfrak{p}\nmid
  \mathfrak{f}_{\psi}}(1-\widehat{\psi}(\sigma_{\mathfrak{p}})N\mathfrak{p}^{-s})^{-1}.
  \end{equation*}
   The function
$L(s,\widehat{\psi})$ can be analytically continued to an analytic
function on $\mathbb{C}$ (meromorphic when $\psi=1$). For any $s\in
\mathbb{C}$ and any non trivial character $\psi$ we have
\begin{equation*}
L_{S}(s,\psi)=\prod_{\substack{\mathfrak{p}\mid \mathfrak{f}_{F}\\
\mathfrak{p}\nmid
\mathfrak{f}_{\psi}}}(1-\widehat{\psi}(\sigma_{\mathfrak{p}})N\mathfrak{p}^{-s})
L(s,\widehat{\psi})
\end{equation*}
where  $\mathfrak{f}_{F}$ is the conductor of $F/k$. Since $F/k$ is
an extension of totally real fields, we have
\begin{equation*}
\mathrm{ord}_{s=0}(L_{S,T}(s,\psi))=\mathrm{ord}_{s=0}(L_{S}(s,\psi))=\mathrm{ord}_{s=0}(L_{S}(s,\widehat{\psi})).
\end{equation*}
Then
\begin{eqnarray*}
  L^{(r)}_{S,T}(0,\psi) &=& L^{(r)}_{S}(0,\psi).\prod_{\mathfrak{q}\in T}(1-\psi(\sigma_{\mathfrak{q}})\mathrm{N}\mathfrak{q}) \\
   &=&\prod_{\mathfrak{q}\in T}(1-\psi(\sigma_{\mathfrak{q}})\mathrm{N}\mathfrak{q}).\prod_{\substack{\mathfrak{p}\mid \mathfrak{f}_{F}\\
\mathfrak{p}\nmid
\mathfrak{f}_{\psi}}}(1-\widehat{\psi}(\sigma_{\mathfrak{p}}))
L^{(r)}(0,\widehat{\psi}).
\end{eqnarray*}
Remark that for any prime $\mathfrak{p}$ we have
\begin{equation*}
\sigma_{\mathfrak{p}}^{-1}e_{I_{\mathfrak{p}}}e_{\psi^{-1}}=\hat{\psi}(\sigma_{\mathfrak{p}})e_{\psi^{-1}}
\end{equation*}
where
$e_{I_{\mathfrak{p}}}=\frac{1}{|I_{\mathfrak{p}}|}\displaystyle{\sum_{\sigma\in
I_{\mathfrak{p}}}}\sigma$. Hence we have the following proposition
\begin{pro}\label{Proposition omega}
 There exists an element $\omega_{K}\in \mathbb{C}[G]$
independent of the choice of the field $F$ which verifies
\begin{equation*}
\pi_{F}(e_{S,r})R_{w^{'}}(\eta_{F})=\pi_{F}\big(e_{S,r}\omega_{K}(\delta_{T}\prod_{\mathfrak{p}\mid
\mathfrak{f}_{F}}(1-\sigma_{\mathfrak{p}}^{-1}e_{I_{\mathfrak{p}}}))\big),
  \end{equation*}
  where
\begin{equation*}
   \omega_{K}:=\displaystyle{
 \sum_{\chi\in\widehat{G},\,
 r_{S}(\chi)=r}}L^{(r)}(0,\widehat{\chi})e_{\chi^{-1}}\quad\mbox{and}\quad
 \delta_{T}:=\prod_{\mathfrak{q}\in
 T}(1-\sigma_{\mathfrak{q}}^{-1}\mathbf{N}\mathfrak{q})
 \end{equation*}
 \end{pro}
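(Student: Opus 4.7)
The plan is to apply the Rubin--Stark conjecture at $F=K_\mathfrak{g}$ and then to rewrite each scalar factor that appears as the action of an explicit element of $\mathbb{C}[G]$ on the idempotents $e_\chi$, so that after projection by $\pi_F$ the two sides agree term by term.

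First I would invoke the Rubin--Stark conjecture for $F/k$, which yields
$$R_{w'}(\eta_F)=\Theta_{S_\mathfrak{g},T,F/k}^{(r)}(0)=\sum_{\psi\in\widehat{\Delta}}L_{S_\mathfrak{g},T}^{(r)}(0,\psi^{-1})e_\psi.$$
Multiplying by $\pi_F(e_{S,r})$ keeps only those summands $e_\psi$ for which the pull-back character $\chi:=\psi\circ\pi_F$ satisfies $r_S(\chi)=r$. Since the infinite places of $k$ split completely in $K/k$ and already contribute $r$ to $r_S(\chi)$, this constraint forces $\widehat{\psi}(\sigma_\mathfrak{p})\neq 1$ at every finite prime of $S$ at which $\widehat{\psi}$ is unramified. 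Consequently $L_{S_\mathfrak{g},T}(s,\psi^{-1})$ vanishes to order exactly $r$ at $s=0$, and its leading coefficient factors as
$$L_{S_\mathfrak{g},T}^{(r)}(0,\psi^{-1})=\prod_{\mathfrak{q}\in T}(1-\widehat{\psi}^{-1}(\sigma_\mathfrak{q})\mathbf{N}\mathfrak{q})\prod_{\substack{\mathfrak{p}\mid\mathfrak{f}_F\\ \mathfrak{p}\nmid\mathfrak{f}_\psi}}(1-\widehat{\psi}^{-1}(\sigma_\mathfrak{p}))\,L^{(r)}(0,\widehat{\psi}^{-1}),$$
using the factorisation already derived in the excerpt.

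Next I would recognise each factor on the right-hand side as the action of an element of $\mathbb{C}[G]$ on $e_\chi$. The formula $\tau\cdot e_\chi=\chi(\tau)e_\chi$ gives $\delta_T\cdot e_\chi=\prod_{\mathfrak{q}\in T}(1-\chi^{-1}(\sigma_\mathfrak{q})\mathbf{N}\mathfrak{q})e_\chi$, and the identity recorded just before the proposition produces
$$(1-\sigma_\mathfrak{p}^{-1}e_{I_\mathfrak{p}})\cdot e_\chi=\begin{cases}(1-\widehat{\chi}^{-1}(\sigma_\mathfrak{p}))\,e_\chi&\text{if }\mathfrak{p}\nmid\mathfrak{f}_\chi,\\ e_\chi&\text{if }\mathfrak{p}\mid\mathfrak{f}_\chi,\end{cases}$$
the second case because $e_{I_\mathfrak{p}}e_\chi=0$ when $\chi$ is ramified at $\mathfrak{p}$. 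Combining these identities with the definition of $\omega_K$, the $e_\chi$-component of
$$e_{S,r}\,\omega_K\,\delta_T\prod_{\mathfrak{p}\mid\mathfrak{f}_F}(1-\sigma_\mathfrak{p}^{-1}e_{I_\mathfrak{p}})$$
is exactly $L_{S_\mathfrak{g},T}^{(r)}(0,\psi^{-1})$ whenever $\chi=\psi\circ\pi_F$ with $r_S(\chi)=r$, and zero otherwise. Since $\pi_F$ kills precisely the $e_\chi$ with $\chi|_H\neq 1$, the two sides of the claimed identity agree on each $e_\psi\in\mathbb{C}[\Delta]$.

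The independence of $\omega_K$ from $F$ is then immediate, as its definition only involves characters of $G$ and primitive $L$-values. The main obstacle I anticipate is the careful bookkeeping of characters versus their inverses when passing between the expansion of $R_{w'}(\eta_F)$ (indexed by $\psi$ with coefficient $L^{(r)}(0,\widehat{\psi}^{-1})$) and the expansion of $\omega_K$ (indexed by $\chi^{-1}$ with coefficient $L^{(r)}(0,\widehat{\chi})$); a secondary technical point is the justification that $r_S(\chi)=r$ genuinely pins down the order of vanishing, so that no derivatives of the Euler-factor polynomials intervene in the leading coefficient and the factorisation remains clean.
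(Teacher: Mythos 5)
Your proposal is correct and follows essentially the same route as the paper: expand $R_{w'}(\eta_F)$ as the leading coefficient of the Stickelberger element, factor each $L^{(r)}_{S,T}(0,\psi)$ into a primitive $L$-value times the $T$- and conductor-Euler factors, convert these scalars into the action of $\delta_T$, $\prod_{\mathfrak{p}\mid \mathfrak{f}_F}(1-\sigma_{\mathfrak{p}}^{-1}e_{I_{\mathfrak{p}}})$ and $\omega_K$ on the idempotents (using $\sigma_{\mathfrak{p}}^{-1}e_{I_{\mathfrak{p}}}e_{\chi}=\widehat{\chi}(\sigma_{\mathfrak{p}})e_{\chi}$ and $e_{I_{\mathfrak{p}}}e_{\chi}=0$ for ramified $\chi$), and finish with $\pi_F$ killing the $e_{\chi}$ with $\chi(H)\neq 1$. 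The only differences are organizational (character-by-character comparison and the $\psi$ versus $\psi^{-1}$ bookkeeping), not mathematical.
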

 \begin{proof}
 As we previously stated
 \begin{equation*}
 R_{w^{'}}(\eta_{F})=\Theta^{(r)}_{S,T,F/k}(0
 )=\Sigma_{\psi\in \hat{\Delta}}L_{S,T}^{(r)}(0,\psi)e_{\psi^{-1}}.
 \end{equation*}
Since $L^{(r)}_{S,T}(0,\psi)=\displaystyle{\prod_{\mathfrak{q}\in
T}(1-\psi(\sigma_{\mathfrak{q}})\mathrm{N}\mathfrak{q}).\prod_{\mathfrak{p}\mid
\mathfrak{f}_{F},\, \mathfrak{p}\nmid
\mathfrak{f}_{\psi}}}(1-\widehat{\psi}(\sigma_{\mathfrak{p}}))
L^{(r)}(0,\widehat{\psi})$ holds, we obtain
\begin{align*}
\mathrm{R}_{w^{'}}(\eta_{F})&=\Sigma_{\psi\in \hat{\Delta},
r_{S}(\psi)=r} \big((\prod_{\mathfrak{q}\in
T}(1-\psi(\sigma_{\mathfrak{q}})\mathbf{N}\mathfrak{q}))({\prod}_{\mathfrak{p}\mid
\mathfrak{f}_F, \mathfrak{p}\nmid
\mathfrak{f}_{\psi}}(1-\hat{\psi}(\sigma_{\mathfrak{p}}))\big)
L^{(r)}(0,\hat{\psi})e_{\psi^{-1}}\\
&=\Sigma_{\psi\in \hat{\Delta},
r_{S}(\psi)=r}\big((\prod_{\mathfrak{q}\in
T}(1-\sigma_{\mathfrak{q}}^{-1}\mathbf{N}\mathfrak{q}))({\prod}_{\mathfrak{p}\mid
\mathfrak{f}_F,
 \mathfrak{p}\nmid \mathfrak{f}_{\psi}}(1-\sigma_{\mathfrak{p}}^{-1}e_{I_{\mathfrak{p}}}))e_{\psi^{-1}}\big)
 (L^{(r)}(0,\hat{\psi})e_{\psi^{-1}})\\
&=\big({\Sigma}_{\psi\in \hat{\Delta},
r_{S}(\psi)=r}(\prod_{\mathfrak{q}\in
T}(1-\sigma_{\mathfrak{q}}^{-1}\mathbf{N}\mathfrak{q}).{\prod}_{\mathfrak{p}\mid
\mathfrak{f}_F, \mathfrak{p}\nmid
\mathfrak{f}_{\psi}}(1-\sigma_{\mathfrak{p}}^{-1}e_{I_{\mathfrak{p}}}))e_{\psi^{-1}}\big)\big({\Sigma}_{\psi\in
\hat{\Delta}, r_{S}(\psi)=r}L^{(r)}(0,\hat{\psi})e_{\psi^{-1}}\big)
\end{align*}
where $I_{\mathfrak{p}}$ is the inertia group of $\mathfrak{p}$ in
$F/k$. Using the fact that each character of
$\Delta=\mathrm{Gal}(F/k)$ can be seen as a character of
$G=\mathrm{Gal}(K/k)$ trivial on $H=\mathrm{Gal}(K/F)$, we get
\begin{center}
$\pi_{F}(e_{\psi^{-1}\circ \pi_{F}})=e_{\psi^{-1}}$\;\; and\;\;
$\sigma_{\mathfrak{p}}^{-1}e_{I_{\mathfrak{p}}}e_{\psi^{-1}\circ
\pi_{F}}=\hat{\psi}(\sigma_{\mathfrak{p}})e_{\psi^{-1}\circ\pi_{F}}$
\end{center}
 where $I_{\mathfrak{p}}$ denotes also the inertia group of
$\mathfrak{p}$ in $K/k$. Therefore
\begin{equation*}
 \pi_{F}(e_{S,r})R_{w^{\prime}}(\eta_{F})=\pi_{F}\bigg( e_{S,r}\big(\Sigma_{\substack{\chi\in
  \hat{G},\,
r_{S}(\chi)=r\\\chi(H)=1}}(\prod_{\mathfrak{q}\in
T}(1-\sigma_{\mathfrak{q}}^{-1}\mathbf{N}\mathfrak{q}))(\prod_{\mathfrak{p}\mid
\mathfrak{f}_F, \mathfrak{p}\nmid
\mathfrak{f}_{\chi}}(1-\sigma_{\mathfrak{p}}^{-1}e_{I_{\mathfrak{p}}}))e_{\chi^{-1}}\big)\omega_{K}\bigg)
\end{equation*}
where
\begin{equation*}
\omega_{K}:={\Sigma}_{\chi\in \hat{G},\,
r_{S}(\chi)=r}L^{(r)}(0,\hat{\chi})e_{\chi^{-1}}.
\end{equation*}
Since
\begin{equation*}
 \pi_{F}(e_{\chi})=\left\{
                           \begin{array}{ll}
                             0, & \hbox{if $\chi(H)\neq 1$;} \\
                             \frac{1}{|\Delta|}\sum_{\sigma\in \Delta}\chi(\sigma)\sigma^{-1}, & \hbox{if $\chi(H)=1$.}
                           \end{array}
                         \right.
\end{equation*}
holds, we get
 \begin{equation*}
\pi_{F}(e_{S,r})\mathrm{R}_{w^{'}}(\eta_{F})=\pi_{F}\bigg(e_{S,r}
\omega_{K}\big(\delta_{T}.\prod_{\mathfrak{p}\mid
\mathfrak{f}_F}(1-\sigma_{\mathfrak{p}}^{-1}e_{I_{\mathfrak{p}}})\big)\bigg)
 \end{equation*}
 where $\delta_{T}=\prod_{\mathfrak{q}\in
 T}(1-\sigma_{\mathfrak{q}}^{-1}\mathbf{N}\mathfrak{q})$. This finishes the proof of the proposition.
\end{proof}
We combine the results of the two previous sections and get
\begin{coro}\label{corollary 3}
Recall that $H:=\mathrm{Gal}(K/F)$. Then
\begin{equation*}
\pi_{F}(e_{S,r}\mathrm{R}_{w}(\eta_{F}))=\pi_{F}\big(\omega_{K}(|H|^{r}(\delta_{T}.\prod_{\mathfrak{p}\mid
\mathfrak{f}_F}(1-\sigma_{p}^{-1}e_{I_{\mathfrak{p}}}))e_{S,r}\big).
\end{equation*}
\end{coro}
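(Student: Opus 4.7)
The proof plan is a direct chaining of the two propositions proved earlier in the section, so I will present it as a bookkeeping exercise rather than as a new substantive argument.

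First, I would observe that $\pi_{F}\colon \mathbb{C}[G]\to\mathbb{C}[\Delta]$ is a ring homomorphism, and hence
\begin{equation*}
\pi_{F}\bigl(e_{S,r}\mathrm{R}_{w}(\eta_{F})\bigr)=\pi_{F}(e_{S,r})\,\pi_{F}\bigl(\mathrm{R}_{w}(\eta_{F})\bigr).
\end{equation*}
Next, I would apply Proposition \ref{Proposition regulator} with $u_{F}=\eta_{F}$, noting that by construction $\eta_{F}\in\bigcap^{r}_{\Delta}U_{S_{\mathfrak{g}},T}(F)\subset \mathbb{R}\bigwedge^{r}_{\Delta}U_{S,T}(F)$, so that the hypothesis of the proposition is satisfied. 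This yields
\begin{equation*}
\pi_{F}\bigl(\mathrm{R}_{w}(\eta_{F})\bigr)=|H|^{r}\mathrm{R}_{w^{\prime}}(\eta_{F}).
\end{equation*}

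Substituting back, and pulling out the scalar $|H|^{r}\in\mathbb{Q}\subset\mathbb{C}[\Delta]$, I obtain
\begin{equation*}
\pi_{F}\bigl(e_{S,r}\mathrm{R}_{w}(\eta_{F})\bigr)=|H|^{r}\,\pi_{F}(e_{S,r})\,\mathrm{R}_{w^{\prime}}(\eta_{F}).
\end{equation*}
At this point I would invoke Proposition \ref{Proposition omega}, which computes exactly $\pi_{F}(e_{S,r})\mathrm{R}_{w^{\prime}}(\eta_{F})$ as $\pi_{F}\bigl(e_{S,r}\omega_{K}(\delta_{T}\prod_{\mathfrak{p}\mid\mathfrak{f}_{F}}(1-\sigma_{\mathfrak{p}}^{-1}e_{I_{\mathfrak{p}}}))\bigr)$. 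Substituting this in and using that $\pi_{F}$ is $\mathbb{Q}$-linear (so the scalar $|H|^{r}$ can be absorbed into the argument of $\pi_{F}$), I arrive at the desired identity
\begin{equation*}
\pi_{F}\bigl(e_{S,r}\mathrm{R}_{w}(\eta_{F})\bigr)=\pi_{F}\Bigl(\omega_{K}\,|H|^{r}\bigl(\delta_{T}\!\!\prod_{\mathfrak{p}\mid\mathfrak{f}_{F}}\!(1-\sigma_{\mathfrak{p}}^{-1}e_{I_{\mathfrak{p}}})\bigr)e_{S,r}\Bigr).
\end{equation*}

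There is no real obstacle here: the only minor point to verify is that $\eta_{F}$ genuinely lies in the domain to which Proposition \ref{Proposition regulator} applies (clear from the Rubin–Stark conjecture assumed valid in this paper) and that all the factors $|H|^{r}$, $\omega_{K}$, $e_{S,r}$, $\delta_{T}$, and the Euler-like product commute in $\mathbb{C}[G]$, which they do since $G$ is abelian. The order in which they appear inside $\pi_{F}(\cdots)$ in the statement is therefore irrelevant, and the two propositions combine to give the corollary immediately.
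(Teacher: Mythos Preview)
Your proposal is correct and matches the paper's approach exactly: the paper simply states the corollary as the combination of Propositions~\ref{Proposition regulator} and~\ref{Proposition omega} with no further argument, and your write-up just spells out that combination carefully.
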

\section{Index of the "Stark" module}
\subsection{The generalised Sinnott index}
 We recall some data about the generalised Sinnott index. For a more complete exhibit
  of the properties of this index the reader is invited to refer to \cite{Sinn1}.
  Let $p$ be a prime rational and $v_{p}$ its normalised valuation
($v_{p}(p)=1$). Let $\mathbb{F}$ be one of the fields $\mathbb{Q}$,
$\mathbb{Q}_p$ or $\mathbb{R}$, and let
\begin{equation*}
\mathcal{O}:= \left\{
  \begin{array}{ll}
    \mathbb{Z}, & \hbox{$\mathbb{F}=\mathbb{Q}\;\mbox{or}\;\mathbb{R}$;} \\
    \mathbb{Z}_{p}, & \hbox{$\mathbb{F}=\mathbb{Q}_{p}$.}
  \end{array}
\right.
\end{equation*}
Let $E$ be an $\mathbb{F}$-vector space of finite dimension $d$. An
$\mathcal{O}$-lattice $\Lambda$ is a free $\mathcal{O}$-submodule of
$E$ of rank $d$ such that the $\mathbb{F}$-vector space generated by
$\Lambda$ is $E$. If $M$ and $N$ are two lattices of $E$, we define
the generalised Sinnott index as follows
\begin{equation*}
 (M:N)=\left\{
\begin{array}{lcl}
\mid\mathrm{det}(\gamma)\mid\;\mathrm{if}\; \mathbb{F}=\mathbb{Q}\;\mathrm{or}\;\mathbb{R} \\
p^{v_{p}(\mathrm{det}(\gamma))} \;\mathrm{if}\; \mathbb{F}=\mathbb{Q}_p \\
\end{array}
\right.
\end{equation*}
where $\gamma$ is an automorphism of the $\mathbb{F}$-vector space
$E$ such that $\gamma (M)=N$.\vskip 6pt Recall that
$\mathrm{T}_{\mathfrak{r}}(K)$ denotes the subgroup of $G$ generated
by the inertia groups $I_{\mathfrak{q}}(K/k)$ with $\mathfrak{q}\mid
\mathfrak{r}$.
\begin{deft}\label{Sinnot modul}
Let $\mathfrak{f}$ be the conductor of $K/k$. Let $\mathfrak{s}$ be
a divisor of $\widehat{\mathfrak{f}}$. If $\mathfrak{s}\neq (1)$,
then we denote by $U^{(r)}_{\mathfrak{s}}$ or
$U^{(r)}_{\mathfrak{s},K}$ the
$\mathbb{Z}[\mathrm{Gal}(K/k)]$-submodule of
$\mathbb{Q}[\mathrm{Gal}(K/k)]$ generated by all the elements
\begin{equation*}
\alpha(\mathfrak{r},\mathfrak{s})=s(\mathrm{T}_{\mathfrak{r}}(K))^{r}\prod_{\mathfrak{p}\mid
\mathfrak{s}/\mathfrak{r}}(1-\sigma_{\mathfrak{p}}^{-1}e_{I_{\mathfrak{p}}});\;
\mathfrak{r}\mid \mathfrak{s},\;\mbox{where}\;
s(\mathrm{T}_{\mathfrak{r}}(K))=\sum_{\sigma\in\mathrm{T}_{\mathfrak{r}}(K)}\sigma.
\end{equation*}
Moreover we set $U^{(r)}_{(1)}=\mathbb{Z}[\mathrm{Gal}(K/k)]$,
$U^{(r)}_{K}=U^{(r)}_{\widehat{\mathfrak{f}}}$ and
$U^{(1)}_{\mathfrak{s}, K}=U_{\mathfrak{s}}$.
\end{deft}
\begin{rem} The modules $U_{\mathfrak{s}}$   were introduced in \cite{Sinn1} when $k$ is equal to the
field of rational numbers $\mathbb{Q}$. Sinnott used these modules
to study the index of cyclotomic units in the cyclotomic
$\mathbb{Z}_{p}$-extension. This technique has been followed in the
case of circular units or in \cite{Oukh} for the elliptic units
case.
\end{rem}
\begin{lem}\label{Lemme indeces}
The following generalized Sinnott indices are well defined
\begin{enumerate}
\item $(e_{S,r}U^{(r)}_{K}: e_{S,r}\omega_{K}U^{(r)}_{K})$
\item $(e_{S,r}\mathbb{Z}[G]:e_{S,r}U^{(r)}_{K})$
\item $(e_{S,r}\mathbb{Z}[G]:\mathrm{R}_{w}(e_{S,r}\displaystyle{\bigcap^{r}_{G}}U_{S,T}(K))$
\end{enumerate}
\end{lem}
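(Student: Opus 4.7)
My plan is to check, for each pair $(M,N)$ in the statement, that both modules are $\mathbb{Z}$-lattices in a common ambient space, at which point the generalized Sinnott index is well-defined by the recalled definition. In all three cases I take $\mathbb{F}=\mathbb{R}$, $\mathcal{O}=\mathbb{Z}$, and ambient space $e_{S,r}\mathbb{R}[G]$. Each of the four modules in the statement is a finitely generated torsion-free $\mathbb{Z}$-submodule of $\mathbb{R}[G]$ by construction (for case $(3)$, as the image of such a module under an $\mathbb{R}$-linear map), so the only issue is full $\mathbb{R}$-rank. Since $G$ is abelian and $e_{S,r}=\sum_{\chi\,:\,r_{S}(\chi)=r}e_{\chi}$, full rank for a submodule $L$ is equivalent to $e_{\chi}L\neq 0$ for every such $\chi$. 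Note that our standing hypotheses $\mathfrak{f}\neq(1)$ and $S^{\prime}\neq\emptyset$ give $|S|\geq r+2$, so $r_{S}(1)=|S|-1>r$ and the trivial character is absent from $e_{S,r}$; only non-trivial $\chi$ need be considered.

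For index $(2)$, $e_{S,r}U^{(r)}_{K}\subseteq e_{S,r}\mathbb{Z}[G]\subseteq e_{S,r}\mathbb{Q}[G]$. I would test $e_{\chi}U^{(r)}_{K}\neq 0$ on the generator $\alpha((1),\widehat{\mathfrak{f}})=\prod_{\mathfrak{p}\mid\widehat{\mathfrak{f}}}(1-\sigma_{\mathfrak{p}}^{-1}e_{I_{\mathfrak{p}}})\in U^{(r)}_{K}$. Using the identities $e_{I_{\mathfrak{p}}}e_{\chi}=e_{\chi}$ if $\chi|_{I_{\mathfrak{p}}}=1$ and $0$ otherwise, together with the commutativity of $\mathbb{C}[G]$, each factor $e_{\chi}(1-\sigma_{\mathfrak{p}}^{-1}e_{I_{\mathfrak{p}}})$ simplifies to $e_{\chi}$ in the second case and to $(1-\chi(\sigma_{\mathfrak{p}})^{-1})e_{\chi}$ in the first. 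The latter vanishes precisely when $\chi(D_{\mathfrak{p}})=1$, so the whole product is non-zero iff $\chi(D_{\mathfrak{p}})\neq 1$ at every $\mathfrak{p}\mid\widehat{\mathfrak{f}}$. Since the $r$ infinite places of $k$ are totally split in the totally real extension $K$, they contribute exactly $r$ to $|\{v\in S:\chi(D_{v})=1\}|$, and the condition $r_{S}(\chi)=r$ forces $\chi(D_{\mathfrak{p}})\neq 1$ at every finite $\mathfrak{p}\in S$, in particular at every $\mathfrak{p}\mid\widehat{\mathfrak{f}}$. Thus $e_{\chi}\alpha((1),\widehat{\mathfrak{f}})\neq 0$ and $(2)$ is settled.

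Index $(1)$ then reduces to showing that $e_{S,r}\omega_{K}$ is a unit in $e_{S,r}\mathbb{R}[G]$: then $e_{S,r}\omega_{K}U^{(r)}_{K}=\omega_{K}(e_{S,r}U^{(r)}_{K})$ is the image of a $\mathbb{Z}$-lattice under an $\mathbb{R}$-linear automorphism of $e_{S,r}\mathbb{R}[G]$, hence itself a $\mathbb{Z}$-lattice in the same space. From the definition of $\omega_{K}$ one reads off $e_{\chi^{-1}}\omega_{K}=L^{(r)}(0,\widehat{\chi})e_{\chi^{-1}}$, and the classical order-of-vanishing formula for Artin $L$-functions at $s=0$ (each archimedean place of a totally real base contributes $1$) gives $\mathrm{ord}_{s=0}L(s,\widehat{\chi})=r$ for every non-trivial $\chi$, so $L^{(r)}(0,\widehat{\chi})\neq 0$. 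For index $(3)$, I would invoke the standard Dirichlet--Stark result (which is precisely the motivation behind the idempotent $e_{S,r}$) that $e_{S,r}\mathcal{L}_{S,\infty}$ is an $\mathbb{R}[G]$-isomorphism from $e_{S,r}\mathbb{R}U_{S,T}(K)$ onto $e_{S,r}\mathbb{R}S_{\infty,K}$: this follows from injectivity of $\mathcal{L}_{S}$ combined with the equality $\dim_{\mathbb{C}}e_{\chi}\mathbb{C}U_{S,T}(K)=r=\dim_{\mathbb{C}}e_{\chi}\mathbb{C}S_{\infty,K}$ for each $\chi$ with $r_{S}(\chi)=r$. Taking $r$-th exterior powers preserves this isomorphism, so $\mathrm{R}_{w}$ restricts to an $\mathbb{R}$-linear isomorphism from $e_{S,r}\mathbb{R}\bigcap^{r}_{G}U_{S,T}(K)$ onto $e_{S,r}\mathbb{R}[G]$, and the image of the $\mathbb{Z}$-lattice $e_{S,r}\bigcap^{r}_{G}U_{S,T}(K)$ is therefore a $\mathbb{Z}$-lattice in $e_{S,r}\mathbb{R}[G]$.

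The main technical obstacle I anticipate is the character-by-character computation in $(2)$: one has to reconcile the condition $r_{S}(\chi)=r$, which concerns the decomposition groups at all finite places of $S$ (including those in $S^{\prime}$, which never appear in the generators of $U^{(r)}_{K}$), with the explicit product over $\mathfrak{p}\mid\widehat{\mathfrak{f}}$. Fortunately $r_{S}(\chi)=r$ is stronger than what is needed for $e_{\chi}\alpha((1),\widehat{\mathfrak{f}})\neq 0$, so the argument goes through, but keeping track of the role of the auxiliary set $S^{\prime}$ is the most delicate point.
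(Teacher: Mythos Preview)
Your proposal is correct and follows the same underlying approach as the paper, namely verifying that each module in question is a full $\mathbb{Z}$-lattice in $e_{S,r}\mathbb{R}[G]$; the paper's proof is extremely terse, merely asserting that $U^{(r)}_{K}$ is a lattice in $\mathbb{Q}[G]$ for (1)--(2) and that the regulator image is a lattice in $e_{S,r}\mathbb{Q}[G]$ for (3), whereas you actually supply the verifications. One minor difference: the paper invokes the \emph{stronger} fact that $U^{(r)}_{K}$ spans all of $\mathbb{Q}[G]$ (which requires, for characters with $r_{S}(\chi)>r$, generators $\alpha(\mathfrak{r},\widehat{\mathfrak{f}})$ with $\mathfrak{r}\neq(1)$), while your direct test on the single generator $\alpha((1),\widehat{\mathfrak{f}})$ suffices precisely because you work only in the $e_{S,r}$-component---this is a cleaner route for the statement at hand.
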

\begin{proof} The assertions $(1)$ and $(2)$ are  a direct consequence
of the fact that $U^{(r)}_{K}$ is a lattice of $\mathbb{Q}[G]$ and
the definition of the generalized Sinnott index. The image of
$e_{S,r}\displaystyle{\bigcap^{r}_{G}} U_{S,T}(K)$ by the
Rubin-Stark regulator is a lattice of $e_{S,r}\mathbb{Q}[G]$ and
hence, the index in $(3)$ is well defined.
\end{proof}
\begin{coro}\label{Corollary Sinnot}
The generalized Sinnott index
$(e_{S,r}\displaystyle{\bigcap^{r}_{G}}
U_{S,T}(K):e_{S,r}\mathrm{Stark}_{K,T})$ is well defined and we have
the equality \begin{equation*}
(e_{S,r}\displaystyle{\bigcap^{r}_{G}}
 U_{S,T}(K):e_{S,r}\mathrm{Stark}_{K,T})=\frac{(e_{S,r}
 \mathbb{Z}[G]:e_{S,r}\delta_{T}U^{(r)}_{K})}{(e_{S,r}\mathbb{Z}[G]
 :\mathrm{R}_{w}(e_{S,r}\displaystyle{\bigcap^{r}_{G}} U_{S,T}(K)))}.(e_{S,r}U^{(r)}_{K}:
\omega_{K} e_{S,r}U^{(r)}_{K}).
\end{equation*}
\end{coro}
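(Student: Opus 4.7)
The approach is to transport the index computation from $e_{S,r}\bigcap^r_G U_{S,T}(K)$ into $e_{S,r}\mathbb{Q}[G]$ via the Rubin-Stark regulator. Since $L^{(r)}(0,\widehat{\chi})\neq 0$ exactly for characters contributing to $e_{S,r}$, the map $R_w$ restricts to a $\mathbb{Q}[G]$-linear isomorphism
$$R_w\colon e_{S,r}\mathbb{Q}\bigcap^r_G U_{S,T}(K)\xrightarrow{\ \sim\ } e_{S,r}\mathbb{Q}[G],$$
and any such isomorphism preserves generalized Sinnott indices. Combined with multiplicativity of the index relative to the common ambient lattice $e_{S,r}\mathbb{Z}[G]$, this yields
$$\Bigl(e_{S,r}\bigcap^r_G U_{S,T}(K):e_{S,r}\mathrm{Stark}_{K,T}\Bigr)=\frac{(e_{S,r}\mathbb{Z}[G]:R_w(e_{S,r}\mathrm{Stark}_{K,T}))}{(e_{S,r}\mathbb{Z}[G]:R_w(e_{S,r}\bigcap^r_G U_{S,T}(K)))}.$$
The denominator is precisely the well-defined index from Lemma \ref{Lemme indeces}(3), reducing the problem to the numerator.

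The key step is to establish
$$R_w(e_{S,r}\mathrm{Stark}_{K,T})=e_{S,r}\,\omega_K\,\delta_T\,U^{(r)}_K.$$
Fix $\mathfrak{g}\mid\widehat{\mathfrak{f}}$, set $F=K_\mathfrak{g}$, $H=\mathrm{Gal}(K/F)=\mathrm{T}_\mathfrak{r}(K)$ with $\mathfrak{r}=\widehat{\mathfrak{f}}/\mathfrak{g}$, so that the primes ramifying in $F/k$ are exactly those dividing $\widehat{\mathfrak{f}}/\mathfrak{r}=\mathfrak{g}$. Because $\eta_F$ is $H$-invariant, every entry of the matrix defining $R_w(\eta_F)$ is divisible by $s(H)=\sum_{h\in H}h$, hence $e_{S,r}R_w(\eta_F)$ lies in $s(H)^r\mathbb{Q}[G]=e_H\mathbb{Q}[G]$. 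The generator $\alpha(\mathfrak{r},\widehat{\mathfrak{f}})=s(H)^r\prod_{\mathfrak{p}\mid\mathfrak{f}_F}(1-\sigma_\mathfrak{p}^{-1}e_{I_\mathfrak{p}})$ of $U^{(r)}_K$ also lies in $e_H\mathbb{Q}[G]$. Since $\pi_F$ is injective on $e_H\mathbb{Q}[G]$ and sends $s(H)^r$ to $|H|^r$, Corollary \ref{corollary 3} (an identity of $\pi_F$-images) lifts to the identity
$$e_{S,r}R_w(\eta_{K_\mathfrak{g},S_\mathfrak{g},T})=e_{S,r}\omega_K\delta_T\,\alpha(\mathfrak{r},\widehat{\mathfrak{f}})$$
in $e_{S,r}\mathbb{Q}[G]$. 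The assignment $\mathfrak{g}\mapsto\mathfrak{r}=\widehat{\mathfrak{f}}/\mathfrak{g}$ is a bijection on divisors of $\widehat{\mathfrak{f}}$, so as $\mathfrak{g}$ varies, the right-hand sides generate $e_{S,r}\omega_K\delta_T U^{(r)}_K$; the identification follows, and in particular $R_w(e_{S,r}\mathrm{Stark}_{K,T})$ is a lattice, making both indices in the statement well defined.

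To conclude, $\omega_K$ is a unit of $e_{S,r}\mathbb{Q}[G]$, and $\delta_T$ is invertible there by the standing hypothesis that $U_{S,T}(K)$ is torsion-free, so scaling by these elements respects the Sinnott index. Thus
$$(e_{S,r}\mathbb{Z}[G]:e_{S,r}\omega_K\delta_T U^{(r)}_K)=(e_{S,r}\mathbb{Z}[G]:e_{S,r}\delta_T U^{(r)}_K)\cdot(e_{S,r}U^{(r)}_K:\omega_K e_{S,r}U^{(r)}_K),$$
and substituting this into the displayed formula of the first paragraph yields the stated equality. The main obstacle is the middle step: upgrading the $\pi_F$-identity of Corollary \ref{corollary 3} to an equality in $e_{S,r}\mathbb{Q}[G]$, which rests on the twin observations that both sides lie in $e_H\mathbb{Q}[G]$ and that the generating sets of $\mathrm{Stark}_{K,T}$ and of $U^{(r)}_K$ correspond bijectively via $\mathfrak{g}\leftrightarrow\widehat{\mathfrak{f}}/\mathfrak{g}$; everything else is a routine application of the formal properties of the generalized Sinnott index recalled in Section 3.1.
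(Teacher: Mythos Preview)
Your argument is correct and follows essentially the same route as the paper: transport the index via the injective regulator $R_w$, identify $R_w(e_{S,r}\mathrm{Stark}_{K,T})=e_{S,r}\omega_K\delta_T U^{(r)}_K$ by way of Corollary~\ref{corollary 3}, and then factor using multiplicativity of the Sinnott index together with the fact that $\delta_T$ is a non-zero-divisor. The paper simply asserts the identification $R_w(e_{S,r}\mathrm{Stark}_{K,T})=\omega_K e_{S,r}\delta_T U^{(r)}_K$ by citing Corollary~\ref{corollary 3}, whereas you spell out the lifting step---that both $e_{S,r}R_w(\eta_F)$ and $e_{S,r}\omega_K\delta_T\,\alpha(\mathfrak{r},\widehat{\mathfrak{f}})$ lie in $e_H\mathbb{Q}[G]$, on which $\pi_F$ is injective---and make the bijection $\mathfrak{g}\leftrightarrow\widehat{\mathfrak{f}}/\mathfrak{g}$ between generators explicit; this is a welcome clarification rather than a different method.
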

\begin{proof}
 The index $(R_{w}(e_{S,r}\displaystyle{\bigcap^{r}_{G}}
U_{S,T}(K)):R_{w}(e_{S,r}\mathrm{Stark}_{K,T}))$ is well defined and
the map $\mathrm{R}_{w}$ is injective, thus
\begin{equation*}
(e_{S,r}\displaystyle{\bigcap^{r}_{G}}
U_{S,T}(K):e_{S,r}\mathrm{Stark}_{K,T})=
(\mathrm{R}_{w}(e_{S,r}\displaystyle{\bigcap^{r}_{G}}
U_{S,T}(K)):\mathrm{R}_{w}(e_{S,r}\mathrm{Stark}_{K,T})).
\end{equation*}
Since
$\mathrm{R}_{w}(e_{S,r}\mathrm{Stark}_{K})=\omega_{K}e_{S,r}\delta_{T}U^{(r)}_{K}$
(see Corollary \ref{corollary 3}) holds, we obtain
\begin{equation*}
\begin{array}{c}
  (R_{w}(e_{S,r}\displaystyle{\bigcap^{r}_{G}}
U_{S,T}(K)):\mathrm{R}_{w}(e_{S,r}\mathrm{Stark}_{K,T}) ) \\
=\\\frac{(e_{S,r}\mathbb{Z}[G]:e_{S,r}\delta_{T}U^{(r)}_{K})}
{(e_{S,r}\mathbb{Z}[G]:\mathrm{R}_{w}(e_{S,r}\displaystyle{\bigcap^{r}_{G}}
U_{S,T}(K)))}(e_{S,r}\delta_{T}U^{(r)}_{K}:
\omega_{K}e_{S,r}\delta_{T}U^{(r)}_{K}).
\end{array}
\end{equation*}
Using the fact that $\delta_{T}=\prod_{\mathfrak{q}\in
T}(1-\sigma_{\mathfrak{q}}^{-1}\mathbf{N}\mathfrak{q})$ is a
non-zero-divisor, we get
\begin{equation*}
(e_{S,r}\delta_{T}U^{(r)}_{K}:
\omega_{K}e_{S,r}\delta_{T}U^{(r)}_{K})=(e_{S,r}U^{(r)}_{K}:
\omega_{K}e_{S,r}U^{(r)}_{K}).
\end{equation*}
Hence the corollary follows.
\end{proof}
\subsection{The class number Formula}
 Next, we use the previous result to prove the class number formula
 shown in Theorem \ref{theorem 1}.\vskip 6pt
  Let $F/k$ be an intermediate extension in $K/k$, we denote by
$\mathrm{Ram}(F/k)$ the set of primes that ramify in the extension
$F/k$. We make some further notations
\begin{enumerate}
\item $X(F):=\{ \Sigma a_{w}w\in \mathbb{Z}S_{\infty,F}, \Sigma a_{w}=0\}$.
\item $\lambda_{F}: U_{S_{\infty}}(F)\longrightarrow X(F)\otimes\mathbb{R}$ is the
 map defined by
  \begin{equation*}
  \lambda_{F}(\alpha)=-\Sigma_{w\in S_{\infty,F}}\mathrm{log}(\mid \alpha
  \mid_{w})w.
  \end{equation*}
\item $\mathrm{Reg}_{F}=\mid \mathrm{det}(\lambda_{F})\mid$ the regulator associated to $\lambda_{F}$.
\item We assume that $\mathrm{Ram}(K/k)=\{\mathfrak{p}_{1},..,\mathfrak{p}_{\mid Ram(K/k) \mid}\}$.
 For $I\subset \{1,..,\mid \mathrm{Ram}(K/k) \mid\}$ we define  the
field
\begin{equation*}
K_{I}:=K^{D_{I}}
\end{equation*}
 where $D_{I}$ is the subgroup of $G$ generated by the
decomposition groups $D_{i}$ of $\mathfrak{p}_{i}$ in
 $K/k$, $i\in I$.
\end{enumerate}
\begin{lem}\label{lemma eS}
One has $e_{S,r}U_{S,T}(K)=e_{S,r}U_{S_{\infty}}(K)$.
\end{lem}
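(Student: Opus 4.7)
The plan is to show that the idempotent $e_{S,r}$ annihilates every $\mathbb{Z}[G]$-module of the form $\mathbb{Z}[G/D_v]$ for $v$ a finite place in $S$; combined with the divisor exact sequence and the finite-index inclusion $U_{S,T}(K)\hookrightarrow U_S(K)$, this will force the equality of lattices.

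First I would check that only non-trivial characters contribute to $e_{S,r}$: since $\mathfrak{f}\neq(1)$ and $S'$ contains at least one finite place, $|S|\geq r+2$, whence $r_S(1)=|S|-1\geq r+1>r$. For any $\chi\in\widehat G$ with $r_S(\chi)=r$ (necessarily $\chi\neq 1$), I exploit that $K/k$ is totally real: every $v\in S_\infty$ has trivial decomposition group, so all $r=|S_\infty|$ archimedean places already contribute to the count $r_S(\chi)=|\{v\in S:\chi(D_v)=1\}|$. Consequently $\chi(D_v)\neq 1$ at every finite $v\in S$. Using the elementary identity $e_\chi e_{D_v}=e_\chi$ when $\chi(D_v)=1$ and $0$ otherwise (with $e_{D_v}=\tfrac{1}{|D_v|}\sum_{\sigma\in D_v}\sigma$), together with $\mathbb{Q}[G/D_v]\simeq\mathbb{Q}[G]\cdot e_{D_v}$, I conclude $e_{S,r}\cdot\mathbb{Q}[G/D_v]=0$, and hence $e_{S,r}$ annihilates the lattice $\mathbb{Z}[G/D_v]$ element-wise as a subset of $\mathbb{Q}[G/D_v]$.

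Now the divisor map yields an exact sequence of $\mathbb{Z}[G]$-modules
\[
0 \longrightarrow U_{S_\infty}(K) \longrightarrow U_S(K) \xrightarrow{\mathrm{div}} \bigoplus_{v \in S \setminus S_\infty} \mathbb{Z}[G/D_v].
\]
Since $e_{S,r}$ kills the right-hand target element-wise, for every $u\in U_S(K)$ the image $e_{S,r}u$ must lie in $\mathbb{Q}U_{S_\infty}(K)$; combined with the trivial reverse inclusion $e_{S,r}U_{S_\infty}(K)\subseteq e_{S,r}U_S(K)$, this gives $e_{S,r}U_S(K)=e_{S,r}U_{S_\infty}(K)$ as $\mathbb{Z}[G]$-lattices in $e_{S,r}\mathbb{Q}U_S(K)$. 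Finally, since $U_{S,T}(K)$ has finite index in $U_S(K)$ with cokernel embedded in the finite product $\prod_{\mathfrak{q}\in T_K}(\mathcal{O}_K/\mathfrak{q})^\times$, a parallel argument incorporating the $T$-condition---using torsion-freeness of $U_{S,T}(K)$ granted by hypothesis $\mathbf{H}(K/k,S,T,r)$---then identifies $e_{S,r}U_{S,T}(K)$ with $e_{S,r}U_S(K)$, completing the proof.

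The main obstacle will be the passage from the easy $\mathbb{Q}$-vector space identification $e_{S,r}\mathbb{Q}U_{S,T}(K)=e_{S,r}\mathbb{Q}U_{S_\infty}(K)$ (which is essentially immediate from the character-theoretic annihilation) to the equality of $\mathbb{Z}[G]$-lattices. The crucial input is the \emph{strict} annihilation $e_{S,r}\mathbb{Z}[G/D_v]=0$, not merely rational vanishing: this kills the divisor-sequence cokernel as a subset of the ambient rational module, which is what allows rational equality of isotypic components to be promoted to genuine integral equality. Handling the $T$-condition requires similar care, since a priori the finite quotient $U_S(K)/U_{S,T}(K)$ could disturb the $e_{S,r}$-projection; the argument must show that its contribution is absorbed by the same annihilation mechanism.
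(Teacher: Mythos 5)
Your character-theoretic step is fine and is in substance the same as the paper's starting point: it amounts to the identity $e_{S,r}=\prod_{v\in S\setminus S_{\infty}}(1-e_{D_v})$, valid because the archimedean places split completely and (as the paper also assumes) $|S|>r+1$ rules out the trivial character. The genuine gap is exactly at the step you flag as "the main obstacle" and then assert rather than prove. From the divisor sequence you only get, for $u\in U_{S}(K)$, that $e_{S,r}u$ lies in the kernel of the rationalized divisor map, i.e.\ in $\mathbb{Q}U_{S_{\infty}}(K)$. To conclude $e_{S,r}u\in e_{S,r}U_{S_{\infty}}(K)$ you must exhibit an actual unit $\varepsilon\in U_{S_{\infty}}(K)$ with $e_{S,r}u=e_{S,r}\varepsilon$, and the "strict annihilation $e_{S,r}\mathbb{Z}[G/D_v]=0$" produces no such $\varepsilon$: killing the third term of the sequence does not split it integrally, and combining $e_{S,r}U_{S}(K)\subset\mathbb{Q}U_{S_{\infty}}(K)$ with the reverse inclusion $e_{S,r}U_{S_{\infty}}(K)\subseteq e_{S,r}U_{S}(K)$ is a non sequitur. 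Worse, your intermediate claim $e_{S,r}U_{S}(K)=e_{S,r}U_{S_{\infty}}(K)$ can actually fail: take $k=\mathbb{Q}$, $K=\mathbb{Q}(\sqrt{3})$, $S=\{\infty,2,3\}$, so $e_{S,1}=e_{\chi}=\frac{1-\sigma}{2}$; for the $S$-unit $\pi=1+\sqrt{3}$ one has $e_{\chi}\pi=\frac{1}{2}\otimes\pi^{1-\sigma}=\frac{1}{2}\otimes(-(2+\sqrt 3))$, i.e.\ half the fundamental unit in $\mathbb{Q}\mathcal{O}_{K}^{\times}$, which does not lie in $e_{\chi}\mathcal{O}_{K}^{\times}=\mathbb{Z}\otimes(2+\sqrt3)$. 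So rational membership does not upgrade to integral membership by the mechanism you describe; some lattice-level or generator-level argument is unavoidable.

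This is precisely where the paper's proof takes a different (and more hands-on) route: it adjoins the finite places of $S$ one at a time, writing $\mathcal{O}_{S_2}^{\ast}$ as generated over $\mathcal{O}_{S_1}^{\ast}$ (modulo roots of unity) by an element $a$ with $\mathfrak{q}_v^{m}=a\mathcal{O}_{S_1}$, checking what $(1-e_{D_v})$ does to this explicit generator, and then iterating via $e_{S,r}=\prod_{v}(1-e_{D_v})$. Whatever one thinks of the details there, it is that explicit bookkeeping with $S$-unit generators which is meant to supply the integral refinement that your exact-sequence argument never provides. Two further points are only gestured at in your write-up and also need real arguments: the reverse inclusion $e_{S,r}U_{S_{\infty}}(K)\subseteq e_{S,r}U_{S,T}(K)$ (note $U_{S_{\infty}}(K)\not\subset U_{S,T}(K)$), and the replacement of $U_{S}(K)$ by the finite-index subgroup $U_{S,T}(K)$: applying $e_{S,r}$ to a finite-index subgroup in general yields a strictly smaller lattice (e.g.\ $e_{S,r}(U_{S}(K)^{M})=M\,e_{S,r}U_{S}(K)$), so "a parallel argument" is not automatic. (A minor point: $|S|\geq r+2$ does not quite follow from "$\mathfrak{f}\neq(1)$ and $S'$ nonempty" alone, since the prime in $S'$ could divide $\mathfrak{f}$; like the paper, you should simply assume $|S|>r+1$.)
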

\begin{proof} Let $S_{1}$ be a finite set of places of $K$, and let
$S_{2}=S_{1}\cup \{\mathfrak{q}_{v}\}$. Let $\{u_{1},\cdots,u_{t}\}$
be fundamental units of $\mathcal{O}^{\ast}_{S_{1}}$. We claim that
if $\mathfrak{q}_{v}^{m}=a\mathcal{O}_{S_{1}}$ then
$\{u_{1},\cdots,u_{t},a\}$ are  fundamental units for
$\mathcal{O}^{\ast}_{S_{2}}$, and $a^{1-e_{D_{v}}}\in
\mathcal{O}_{S_{1}}^{\ast}$, where $m$ is the order of
$\mathfrak{q}_{v}$ in the ideal class group of $\mathcal{O}_{S_{1}}$
,$D_{v}$ is the decomposition group of $\mathfrak{q}_{v}$ in $K/k$
and $e_{D_{v}}=\frac{1}{|D_{v}|}N_{D_{v}}$. First we prove that this
claim will give the desired result. Since $|S|>r+1$, we obtain
\begin{equation*}
e_{S,r}=\prod_{v\in S-S_{\infty}}(1-e_{D_{v}}).
\end{equation*}
Iterating our claim gives
\begin{equation*}
e_{S,r}U_{S,T}(K)\subset e_{S,r}U_{S_{\infty}}(K)
\end{equation*}
as desired.\vskip 6pt It remains to prove our claim that
$\{u_{1},\cdots,u_{t},a\}$ are  fundamental units for
$\mathcal{O}^{\ast}_{S_{2}}$. Let $u$ be a unit of
$\mathcal{O}_{S_{2}}$. By scaling by an appropriate power of $a$, we
may assume that $0\leq i=v_{\mathfrak{q}_{v}}(u)\leq m-1$. Then
$\mathfrak{q}_{v}^{m}=u\mathcal{O}_{S_{1}}$. Since the order of
$\mathfrak{q}_{v}$ in the ideal class group of $\mathcal{O}_{S_{1}}$
is $m$, we must have $i=0$, so that $u\in
\mathcal{O}^{\ast}_{S_{1}}$. Then  we have $
\mathfrak{q}_{v}^{1-e_{D_{v}}}=\mathcal{O}_{S_{1}}, $ and hence $
a^{1-e_{D_{v}}}\in \mathcal{O}_{S_{1}}^{\ast}. $
\end{proof}
Recall that for a $G$-module,
$\widetilde{\displaystyle{\bigwedge_{G}^{s}}} M$ denotes the image
of $\displaystyle{\bigwedge_{G}^{s}}M$ via the canonical morphism
\begin{equation*}
\xymatrix@=2pc{\displaystyle{\bigwedge_{G}^{s}}M\ar[r]&
\mathbb{Q}\displaystyle{\bigwedge_{G}^{s}}M}.
\end{equation*}
Using the properties of $\mathrm{det}$ and the fact that the
category of $\mathbb{Q}[G]$-modules is semi-simple, we obtain the
following lemma
\begin{lem} Let $M$ and $N$ be $\mathbb{Z}[G]$-lattices, such that
the Sinnott index $(M:N)$ is defined. Then, we have
\begin{equation*}
(M:N)=(\widetilde{\displaystyle{\bigwedge_{G}^{s}}} M:
\widetilde{\displaystyle{\bigwedge_{G}^{s}}} N),
\end{equation*}
where $s$ is maximal.
\end{lem}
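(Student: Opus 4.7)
The approach is to realize both sides of the desired equality as absolute values of rational determinants of the same $\mathbb{Q}[G]$-linear automorphism, reducing the lemma to the classical identity relating the determinant of a linear map to its action on the top exterior power, combined with the Wedderburn decomposition of $\mathbb{Q}[G]$.

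First I would note that, since $(M:N)$ is well defined, $M$ and $N$ span a common $\mathbb{Q}[G]$-module $V:=\mathbb{Q}M=\mathbb{Q}N$. Choose a $\mathbb{Q}[G]$-linear automorphism $\gamma$ of $V$ with $\gamma(M)=N$; then by definition of the generalised Sinnott index, $(M:N)=|\det_{\mathbb{Q}}(\gamma)|$. Applying the functor $\bigwedge_{G}^{s}$ produces an automorphism $\bigwedge_{G}^{s}\gamma$ of $\mathbb{Q}\bigwedge_{G}^{s}V$ which sends $\widetilde{\bigwedge_{G}^{s}}M$ onto $\widetilde{\bigwedge_{G}^{s}}N$, so the right-hand side of the lemma equals $|\det_{\mathbb{Q}}(\bigwedge_{G}^{s}\gamma)|$. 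The lemma thus reduces to the identity $|\det_{\mathbb{Q}}(\gamma)|=|\det_{\mathbb{Q}}(\bigwedge_{G}^{s}\gamma)|$.

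To prove this last identity I would invoke the Wedderburn decomposition $\mathbb{Q}[G]\simeq\prod_{\chi}\mathbb{Q}(\chi)$, where $\chi$ runs through Galois orbits of complex characters of the abelian group $G$. This yields $V=\bigoplus_{\chi}V_{\chi}$ with each $V_{\chi}$ a $\mathbb{Q}(\chi)$-vector space and $\gamma=\bigoplus_{\chi}\gamma_{\chi}$. The hypothesis that $s$ is maximal enters precisely here: it guarantees $\dim_{\mathbb{Q}(\chi)}V_{\chi}=s$ for every $\chi$ in the support of $V$, so that each $\bigwedge_{\mathbb{Q}(\chi)}^{s}V_{\chi}$ is one-dimensional over $\mathbb{Q}(\chi)$ and $\bigwedge_{\mathbb{Q}(\chi)}^{s}\gamma_{\chi}$ acts as multiplication by $\det_{\mathbb{Q}(\chi)}(\gamma_{\chi})$. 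Since cross terms in $\bigwedge_{\mathbb{Q}[G]}^{s}(\bigoplus_{\chi}V_{\chi})$ vanish between distinct Wedderburn factors (their central idempotents are orthogonal), one obtains the identification $\bigwedge_{G}^{s}V=\bigoplus_{\chi}\bigwedge_{\mathbb{Q}(\chi)}^{s}V_{\chi}$. Restricting scalars to $\mathbb{Q}$, both $\det_{\mathbb{Q}}(\gamma)$ and $\det_{\mathbb{Q}}(\bigwedge_{G}^{s}\gamma)$ collapse to the same product $\prod_{\chi}N_{\mathbb{Q}(\chi)/\mathbb{Q}}(\det_{\mathbb{Q}(\chi)}(\gamma_{\chi}))$, yielding the desired equality up to sign.

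The main obstacle is the precise interpretation of \emph{``$s$ is maximal''} (which I read as the condition that the Wedderburn components of $V$ are uniformly of $\mathbb{Q}(\chi)$-dimension $s$, so that the top exterior power is non-trivial on each component), together with the companion technical point of identifying $\bigwedge_{G}^{s}V$ with the componentwise top exterior power $\bigoplus_{\chi}\bigwedge_{\mathbb{Q}(\chi)}^{s}V_{\chi}$. Once these are in place, the equality of determinants is purely a consequence of the standard fact that the determinant of an endomorphism equals its action on the top wedge, applied Wedderburn factor by Wedderburn factor and glued together via the norm maps $N_{\mathbb{Q}(\chi)/\mathbb{Q}}$.
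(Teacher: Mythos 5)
Your argument is correct and is essentially the paper's intended one: the authors leave the proof as an ``Exercise,'' but the sentence introducing the lemma (``using the properties of $\det$ and the fact that the category of $\mathbb{Q}[G]$-modules is semi-simple'') points exactly to your computation, namely realizing both indices as $|\det_{\mathbb{Q}}|$ of $\gamma$ and of $\bigwedge^{s}\gamma$ and comparing them componentwise via the Wedderburn decomposition $\mathbb{Q}[G]\simeq\prod_{\chi}\mathbb{Q}(\chi)$, where each top exterior power is one-dimensional over $\mathbb{Q}(\chi)$ and both determinants collapse to $\prod_{\chi}N_{\mathbb{Q}(\chi)/\mathbb{Q}}(\det_{\mathbb{Q}(\chi)}\gamma_{\chi})$. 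Your reading of ``$s$ maximal'' as requiring every Wedderburn component in the support of $\mathbb{Q}M$ to have $\mathbb{Q}(\chi)$-dimension $s$ is also the right one (and is what holds in the paper's application to $e_{S,r}$-components, all of dimension $r$), since components of smaller dimension would vanish in $\bigwedge^{s}$ and the stated equality would fail without this uniformity.
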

\begin{proof}
Exercise .
\end{proof}
\begin{deft}
Let $M$ be a $\mathbb{Z}[G]$-lattice. We denote by $S(M)$ the
semi-simplified  of $M$. It is the smallest module completely
decomposable containing $M$, and definite by
\begin{equation*}
S(M):=\bigoplus_{\chi\in \mathcal{X}}e_{\chi}M\subset \mathbb{Q}M
\end{equation*}
where $\mathcal{X}$ is the set of all irreducible characters of $G$
over $\mathbb{Q}$.
\end{deft}
Note that the index of $M$ in $S(M)$ is finite. Indeed, let $g=|G|$.
Since $gS(M)\subset M$ and $M$ is a finitely generated module, we
get
\begin{equation*}
(S(M):M)\mid g^{\mathrm{rank}_{\mathbb{Z}}(M)}.
\end{equation*}
 To go further, we need some
notations. For any subextension $F$ of $K/k$, we put
\begin{equation}\label{defenition of cF}
c_{F}=\frac{(S(\lambda_{K}(U_{S_{\infty}}(K)^{N_{H}})):\lambda_{K}(U_{S_{\infty}}(K)^{N_{H}}))}
{(S(X(K)^{N_{H}}):X(K)^{N_{H}})}.
|\widehat{H}^{0}(H,U_{S_{\infty}}(K))|^{-1}
\end{equation}
and
\begin{equation}\label{definition of cesr}
c_{K,r}=\frac{(S(e_{S,r}\lambda_{K}
(U_{S}(K)):e_{S,r}\lambda_{K}(U_{S}(K))}{(S(e_{S,r}X(K)):e_{S,r}X(K))
}
\end{equation}
where $H=\mathrm{Gal}(K/F)$ and $N_{H}=\sum_{\sigma\in
H}\sigma$.\vskip 6pt The following proposition is crucial for our
purpose.
\begin{pro}\label{Proposition index of Regulator}
\begin{equation*}
(e_{S,r}\mathbb{Z}[G]:R_{w}(e_{S,r}\widetilde{\displaystyle{\bigwedge^{r}_{G}}}
U_{S,T}(K)))=
\mathrm{Reg}_{K}c_{K,r}.c_{K}^{-1}\prod_{I\subset\{1,\cdots,|\mathrm{Ram}(K/k)|\}}c_{K_{I}}^{(-1)^{|I|+1}}
\mathrm{Reg}_{K_{I}}^{(-1)^{|I|}}.
\end{equation*}
\end{pro}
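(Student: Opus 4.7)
The plan is to reduce, via Lemma \ref{lemma eS}, to an index involving $U_{S_\infty}(K)$, and then to analyse the Rubin--Stark regulator on the top exterior power. On the semi-simplified level, $R_w$ restricted to $e_{S,r}\widetilde{\bigwedge_G^r}U_{S_\infty}(K)$ becomes a $\mathbb{Q}[G]$-linear isomorphism onto $e_{S,r}\mathbb{Q}[G]$ whose determinant is precisely the classical logarithmic regulator $\lambda_K$ viewed $G$-equivariantly. The correction factor $c_{K,r}$ of \eqref{definition of cesr} then measures the discrepancy between $e_{S,r}\lambda_K(U_{S_\infty}(K))$ and its semi-simplification $S(e_{S,r}\lambda_K(U_{S_\infty}(K)))$.

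The core of the argument is an inclusion--exclusion expansion of $e_{S,r}$. Since $G$ is abelian the idempotents $e_{D_i}=\frac{1}{|D_i|}N_{D_i}$ attached to the decomposition groups of the ramified primes $\mathfrak{p}_i$ commute, and
\[
e_{S,r}\;=\;\prod_{i=1}^{n}(1-e_{D_i})\;=\;\sum_{I\subset\{1,\dots,n\}}(-1)^{|I|}\prod_{i\in I}e_{D_i},\qquad n=|\mathrm{Ram}(K/k)|.
\]
For each $I$, the idempotent $\prod_{i\in I}e_{D_i}$ projects onto the isotypic components of characters trivial on the subgroup $D_I$ generated by the $D_i$ with $i\in I$, i.e.\ onto $e_{D_I}\mathbb{Q}[G]\simeq\mathbb{Q}[\mathrm{Gal}(K_I/k)]$. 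Applied to $\lambda_K(U_{S_\infty}(K))$, this term picks out $\lambda_{K_I}(U_{S_\infty}(K_I))\otimes\mathbb{Q}$, and the corresponding semi-simplified index equals the classical regulator $\mathrm{Reg}_{K_I}$, up to the factor $c_{K_I}$ defined in \eqref{defenition of cF}.

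Combining these contributions by the multiplicativity of the generalized Sinnott index yields the desired product, the sign $(-1)^{|I|}$ propagating to the exponent of $\mathrm{Reg}_{K_I}$ and the opposite sign $(-1)^{|I|+1}$ arising on $c_{K_I}$ because the latter factor enters in the denominator when we pass from the $\mathbb{Z}[G]$-lattice $e_{S,r}\lambda_K(U_{S_\infty}(K))$ to its semi-simplification. The prefactor $\mathrm{Reg}_K\cdot c_{K,r}\cdot c_K^{-1}$ corresponds to the $I=\emptyset$ term of the expansion, isolated so as to display both the top regulator and the global semi-simplification factor $c_{K,r}$ which is specific to the $e_{S,r}$-isotypic component.

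I expect the principal obstacle to be bookkeeping: verifying carefully that after each projection $\prod_{i\in I}e_{D_i}$ the resulting objects are bona fide $\mathbb{Z}[\mathrm{Gal}(K_I/k)]$-lattices in the appropriate $\mathbb{Q}$-subspaces, so that the multiplicativity of generalized Sinnott indices applies, and that the alternation of signs on both the regulators and the $c_F$-factors matches the identity to be proved. The regulator identification itself is classical; what requires genuine care is the compatibility between the $G$-equivariant determinant, the inclusion--exclusion decomposition of $e_{S,r}$, and the semi-simplification factors $c_{K_I}$ and $c_{K,r}$.
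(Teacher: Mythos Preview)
Your outline is essentially the paper's own argument: reduce to $U_{S_\infty}(K)$ via Lemma~\ref{lemma eS}, semi-simplify to pick up $c_{K,r}$ and obtain the character-by-character product $\prod_{r_S(\chi)=r}(e_\chi X(K):e_\chi\lambda_K(U_{S_\infty}(K)))$, and then run inclusion--exclusion over subsets $I$ of the ramified primes to rewrite this in terms of the subproducts indexed by $\{\chi:\chi(D_I)=1\}$.

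One remark on emphasis. You describe the identification of $\prod_{\chi(D_I)=1}(e_\chi X(K):e_\chi\lambda_K(U_{S_\infty}(K)))$ with $c_{K_I}^{-1}\mathrm{Reg}_{K_I}$ as ``classical'' and locate the difficulty in the sign bookkeeping, but in the paper this identification is precisely where the work is. The idempotent $e_{D_I}$ does not literally send $\lambda_K(U_{S_\infty}(K))$ to $\lambda_{K_I}(U_{S_\infty}(K_I))$: one must compare $U_{S_\infty}(K)^{D_I}$ with $U_{S_\infty}(K_I)$ (an inclusion), compare $X(K)^{D_I}$ with $X(K_I)$ (via $w_{K_I}\mapsto N_{D_I}w_K$, whose image is $X(K)^{N_{D_I}}$), and then pass from $D_I$-invariants to $N_{D_I}$-images, which brings in the Tate cohomology groups $\widehat{H}^0(D_I,U_{S_\infty}(K))$ and $\widehat{H}^0(D_I,X(K))$. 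The definition of $c_{K_I}$ in \eqref{defenition of cF} is engineered so that after the two $\widehat{H}^0(D_I,X(K))$ contributions cancel, the surviving factor $|\widehat{H}^0(D_I,U_{S_\infty}(K))|^{-1}$ together with the semi-simplification ratio gives exactly $c_{K_I}$. Once you fill this step in, your argument is complete and agrees with the paper's.
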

\begin{proof} Let $S=S_{\infty}\cup V$ and let
$\mathcal{L}_{S,\infty}$ the map defined in $(\ref{the map LS})$.
The facts that $e_{S,r}\mathbb{R}V_{K}=0$ ($|S|>r+1$) and that the
map
\begin{equation*}
\xymatrix@=2pc{e_{S,r}\mathcal{L}_{S,\infty}:
e_{S,r}\mathbb{R}U_{S,T}(K)\ar[r]^-{e_{S,r}\mathcal{L}_{S}}&
e_{S,r}\mathbb{R}S_{K}\ar[r]^-{\mathrm{id}}&
e_{S,r}\mathbb{R}S_{\infty,K}:=e_{s,r}X(K)}
\end{equation*}
is an isomoprhism, show that
\begin{equation*}
(e_{S,r}X(K): e_{S,r}\mathcal{L}_{S,\infty}(e_{S,r}U_{S,T}(K)))
=\det(e_{S,r}\mathcal{L}_{S}).
\end{equation*}
Then, using the facts
\begin{eqnarray*}
  \big(e_{S,r}X(K):e_{S,r}
\mathcal{L}_{S,\infty}(e_{S,r}U_{S,T}(K))\big) &=&
\big(e_{S,r}\mathbb{Z}S_{\infty,K}:e_{S,r}
\mathcal{L}_{S,\infty}(e_{S,r}U_{S,T}(K))\big)\\
&=&  \big(e_{S,r}
\displaystyle{\widetilde{\bigwedge^{r}_{G}}}\mathbb{Z}S_{\infty,K}:e_{S,r}
\bigwedge^{r}\mathcal{L}_{S,\infty}(e_{S,r}\displaystyle{\widetilde{\bigwedge^{r}_{G}}}U_{S,T}(K))\big) \\
   &=&\big(e_{S,r}\mathbb{Z}[G](w_{1}\wedge...\wedge w_{r}):
   \mathrm{R}_{w}(e_{S,r}\displaystyle{\widetilde{\bigwedge^{r}_{G}}}U_{S,T}(K))
   (w_{1}\wedge...\wedge w_{r}) \big) \\
    &=&\big(e_{S,r}\mathbb{Z}[G]:\mathrm{R}_{w}(e_{S,r}\displaystyle{\widetilde{\bigwedge^{r}_{G}}}U_{S,T}(K)\big)\\
    &=&
    \big(e_{S,r}\mathbb{Z}[G]:R_{w}(e_{S,r}\displaystyle{\widetilde{\bigwedge^{r}_{G}}}
U_{S,T}(K))\big),
\end{eqnarray*}
 we obtain
$\big(e_{S,r}\mathbb{Z}[G]:R_{w}(e_{S,r}\displaystyle{\widetilde{\bigwedge^{r}_{G}}}
U_{S,T}(K))\big)=\det(e_{S,r}\mathcal{L}_{S})$. Therefore, using
lemma \ref{lemma eS}, we get
\begin{eqnarray*}
  (e_{S,r}X(K):e_{S,r}
\mathcal{L}_{S,\infty}(e_{S,r}U_{S,T}(K))) &=& (e_{S,r}X(K):e_{S,r}\lambda_{K}(U_{S_{\infty}}(K))  \\
   &=& c_{K,r}.(S(e_{S,r}X(K)):S(e_{S,r}\lambda_{K}(U_{S_{\infty}}(K)) \\
   &=&c_{K,r}.\prod_{\substack{\chi\in\widehat{G}\\
   r_{S}(\chi)=r}}(e_{\chi}X(K):e_{\chi}\lambda_{K}(U_{S_{\infty}}(K))
\end{eqnarray*}
  Let  $F$ be a subextension of $K/k$. On the one hand,
the commutative diagram
\begin{equation*}
\xymatrix@=2pc{
\mathbb{C}U_{S_{\infty}}(F)\ar[r]^-{\lambda_{F}}\ar[d]_-{i}^-{\wr}&
\mathbb{C}X(F)\ar[d]^-{j}_-{\wr}\\
\mathbb{C}U_{S_{\infty}}(K)^{H}\ar[r]^-{\lambda_{K}}&
\mathbb{C}X(K)^{H}}
\end{equation*}
shows that
\begin{equation*}
\mathrm{Reg}_{F}=(X(K)^{H}:\lambda_{K}(U_{S_{\infty}}(K)^{H})).(X(K)^{H}:j(X(F)))^{-1}.(U_{S_{\infty}}(K)^{H}:i(U_{S_{\infty}}(F)))
\end{equation*}
where
\begin{itemize}
    \item $j(w_{F}):=\sum_{w\mid w_{F}}[K_{w}:F_{w_{F}}]w=N_{H}w_{K}$,
    where $w_{K}\mid w_{F}$ is a place of $K$ laying above $w_{F}$
    \item $i(x)=x$.
\end{itemize}
Since $i$ is injective and $j(X(F))=N_{H}(X(K))$, we obtain
\begin{equation*}
\mathrm{Reg}_{F}=|\widehat{H}^{0}(H,X(K))|^{-1}.(X(K)^{H}:\lambda_{K}(U_{S_{\infty}}(K))^{H}).
\end{equation*}
Using the fact that $\xymatrix@=1pc{
U_{S_{\infty}}(K)\ar[r]^-{\lambda_{K}}& \mathbb{R}X(K)}$ is
injective as $G$-module, we get
\begin{equation*}
(X(K)^{H}:\lambda_{K}(U_{S_{\infty}}(K))^{H}).|\widehat{H}^{0}(H,U_{S_{\infty}}(K))|=
(X(K)^{N_{H}}:\lambda_{K}(U_{S_{\infty}}(K))^{N_{H}}).|\widehat{H}^{0}(H,X(K))|.
\end{equation*}
It follows that
\begin{eqnarray*}
  \mathrm{Reg}_{F} &=&|\widehat{H}^{0}(H,U_{S_{\infty}}(K))|^{-1}.(X(K)^{N_{H}}:\lambda_{K}(U_{S_{\infty}}(K))^{N_{H}})  \\
   &=&c_{F}.(S(X(K)^{N_{H}}):S(\lambda_{K}(U_{S_{\infty}}(K)^{N_{H}}))).
\end{eqnarray*}
where $c_{F}$ is defined in $(\ref{defenition of cF})$.
 On the other hand, for any $\widetilde{\chi}\in
\widehat{\mathrm{Gal}(F/k)}$, we have
\begin{eqnarray*}
 (e_{\widetilde{\chi}}X(K)^{N_{H}}:e_{\widetilde{\chi}}\lambda_{K}(U_{S_{\infty}}(K))^{N_{H}})  &=&
 (|H|e_{\widetilde{\chi}\circ \pi_{F}}X(K):|H|e_{\widetilde{\chi}\circ \pi_{F}}\lambda_{K}(U_{S_{\infty}}(K))\\
   &=& (e_{\widetilde{\chi}\circ \pi_{F}}X(K):e_{\widetilde{\chi}\circ
   \pi_{F}}\lambda_{K}(U_{S_{\infty}}(K)).
\end{eqnarray*}
Then
\begin{equation*}
\mathrm{Reg}_{F}=c_{F}.\prod_{\substack{\chi\in
\widehat{G}\\
\chi(\mathrm{Gal}(K/F))=1}}(e_{\chi}X(K):e_{\chi}\lambda_{K}(U_{S_{\infty}}(K)).
\end{equation*}
 Therefore, a simple inclusion-exclusion argument gives
\begin{equation*}
\prod_{\substack{\chi\in\widehat{G}\\
r_{S}(\chi)=r}}(e_{\chi}X(K):e_{\chi}\lambda_{K}(U_{S_{\infty}}(K)))=
c_{K}^{-1}\mathrm{Reg}_{K}\prod_{I\subset\{1,\cdots,|\mathrm{Ram}(K/k)|\}}c_{K_{I}}^{(-1)^{|I|+1}}
\mathrm{Reg}_{K_{I}}^{(-1)^{|I|}}
\end{equation*}
 Finally
\begin{equation*}
(e_{S,r}\mathbb{Z}[G]:R_{w}(e_{S,r}\widetilde{\displaystyle{\bigwedge^{r}_{G}}}
U_{S,T}(K)))=
c_{K,r}.c_{K}^{-1}\mathrm{Reg}_{K}\prod_{I\subset\{1,\cdots,|\mathrm{Ram}(K/k)|\}}c_{K_{I}}^{(-1)^{|I|+1}}
\mathrm{Reg}_{K_{I}}^{(-1)^{|I|}}.
\end{equation*}
\end{proof}
\noindent We prove now Theorem \ref{theorem 1}\vskip 6pt
\noindent\textbf{Theorem 1.4.} The Sinnott index
$(e_{S,r}\displaystyle{\bigcap^{r}_{G}}
U_{S,T}(K):e_{S,r}\mathrm{St}_{K,T})$ is finite, and we have
\begin{equation*}
[e_{S,r}\displaystyle{\bigcap^{r}_{G}}
U_{S,T}(K):e_{S,r}\mathrm{St}_{K,T}]=h_{K}.(e_{S,r}\mathbb{Z}[G]
:e_{S,r}U^{(r)}_{K}).(e_{S,r}\displaystyle{\bigcap_{G}^{r}}U_{S,T}(K):
e_{S,r}\widetilde{\displaystyle{\bigwedge_{G}^{r}}}U_{S,T}(K)).
\beta_{K}.
\end{equation*}
where \begin{equation*}
 \beta_{K}= c_{K}c_{K,r}^{-1}\prod_{I\subset
\{1,..,\mid
\mathrm{Ram}(K/k)\mid\}}c_{K_{I}}^{(-1)^{|I|}}h_{K_{I}}^{(-1)^{\mid
I \mid}}.
\end{equation*}
\begin{proof}
We begin by the expression obtained in Corollary \ref{Corollary
Sinnot} and analyse each term. We have
\begin{equation*}
(e_{S,r}U^{(r)}_{K}: \omega_{K}e_{S,r}U^{(r)}_{K})=\mid
\mathrm{det}(m_{\omega_{K}}) \mid
\end{equation*}
where $\omega_{K}:={\Sigma}_{\chi\in \hat{G},
r_{S}(\chi)=r}L^{(r)}(0,\hat{\chi})e_{\chi^{-1}}$ and
$m_{\omega_{K}}$ is the multiplication by $w_{K}$. Since the set $\{
e_{\chi},r_{S}(\chi)=r\}$ is an $\mathbb{R}$-base of the vector
space $e_{S,r}\mathbb{R}[G]$, and $e_{S,r}U^{(r)}_{K}$ is a lattice
of it,
\begin{equation*}
\mathrm{det}(m_{\omega_{K}})=\prod_{\chi\in \hat{G},
r_{S}(\chi)=r}L^{(r)}(0,\hat{\chi})
\end{equation*}
 A simple
inclusion-exclusion argument gives
\begin{equation*}
\prod_{\chi\in \hat{G}, r_{S}(\chi)=r}L^{(r)}(0,\hat{\chi}
)=\zeta^{*}_{K}(0)\prod_{I\subset \{1,..,\mid
\mathrm{Ram}(K/k)\mid\}}{\zeta^{*}_{K_{I}}(0)}^{(-1)^{\mid I \mid}}
\end{equation*}
 where $\zeta^{*}_{K_{I}}(0)$ is the first non trivial
term in the Taylor expansion of the function $\zeta_{K_{I}}(s)$ at
$0$ given by
\begin{equation*}
\zeta^{*}_{K_{I}}(0):=\mathrm{lim}_{s\to
0}s^{-\mathrm{ord}_{s=0}(\zeta_{K_{I}}(s))}\zeta_{K_{I}}(s)
\end{equation*}
 Recall
the following well known class number formula (see e.g.
\cite[Corollaire I.1.2]{Tate84})
\begin{equation*}
\zeta^{*}_{K_{I}}(0)=-\frac{h_{K_{I}}.\mathrm{Reg}_{K_{I}}}{|\mu(K_{I})|}
\end{equation*}
 This formula combined with the previous work gives
\begin{equation*}
(e_{S,r}U^{(r)}_{K}: \omega_{K}
e_{S,r}U^{(r)}_{K})=h_{K}\mathrm{Reg}_{K}\prod_{I\subset \{1,..,\mid
\mathrm{Ram}(K/k)\mid\}}h_{K_{I}}^{(-1)^{\mid I
\mid}}\mathrm{Reg}_{K_{I}}^{(-1)^{\mid I \mid}}.
\end{equation*}
Using Proposition \ref{Proposition index of Regulator} and Corollary
\ref{Corollary Sinnot}, we get
\begin{equation*}
(e_{S,r}\displaystyle{\bigcap^{r}_{G}}
U_{S,T}(K):e_{S,r}\mathrm{St}_{K,T})=h_{K}.(e_{S,r}\mathbb{Z}[G]
:e_{S,r}U^{(r)}_{K}).(e_{S,r}\displaystyle{\bigcap_{G}^{r}}U_{S,T}(K):
e_{S,r}\widetilde{\displaystyle{\bigwedge_{G}^{r}}}U_{S,T}(K)).
\beta_{K}.
\end{equation*}
where
\begin{equation}\label{beta}
\beta_{K}= c_{K}c_{K,r}^{-1}\prod_{I\subset \{1,..,\mid
\mathrm{Ram}(K/k)\mid\}}c_{K_{I}}^{(-1)^{|I|}}h_{K_{I}}^{(-1)^{\mid
I \mid}}.
\end{equation}
\end{proof}

\end{document}